 \documentclass[10pt,reqno]{amsart}
  \usepackage{geometry}
  \geometry{paperwidth=176mm, paperheight=250mm, textheight=189mm,
            tmargin=35mm, lmargin=30mm, rmargin=20mm, headsep=8mm,
            headheight=11.0pt, footskip=30pt, twoside=true}
\usepackage{amssymb,latexsym}
\usepackage{amsmath}
\usepackage{graphics}

\newtheorem{theorem}{Theorem}[section]
\newtheorem{corollary}[theorem]{Corollary}
\newtheorem{lemma}[theorem]{Lemma}

\newtheorem{defn}[theorem]{Definition}
\newtheorem{proposition}[theorem]{Proposition}

\newcommand{\Hom}{\operatorname {Hom}}
\newcommand{\End}{\operatorname {End}}
\newcommand{\ran}{\mathbf {r}}
\newcommand{\lan}{\mathbf {l}}
\newcommand{\udim}{\operatorname {u-dim}}
\newcommand{\ke}{\operatorname {Ker}}

\begin{document}

\title[Symmetry of extending properties]{Symmetry of extending properties in nonsingular Utumi rings}


\author{Thuat Do}
\address{$^{(1)}$Institute of Research and Development, Duy Tan University, Da Nang 550000,
Vietnam}
\address{$^{(2)}$Department of Science and Technology, Nguyen Tat Thanh University, 300A Nguyen Tat Thanh St, District 4, Ho Chi Minh City, Vietnam}
\email{thuat86@gmail.com}

\author{Hai Dinh Hoang}
\address{International Cooperation Office, Hong Duc University, 565 Quang Trung St, Dong Ve ward, Thanh Hoa city, Vietnam}
\email{hoangdinhhai@hdu.edu.vn}

\author{Truong Dinh Tu$^\dag$}
\address{Faculty of Information Technology, Ton Duc Thang University, Ho Chi Minh City,
Vietnam}
\email{truongdinhtu@tdtu.edu.vn}



\thanks{{\bf 2010 MSC}: 16D70, 16S50\\
$^\dag$ The corresponding author, email: truongdinhtu@tdtu.edu.vn}

\keywords{CS modules, max-min CS rings, nonsingular rings, Utumi rings}



 \maketitle

\begin{abstract}
This paper presents the right-left symmetry of the CS and max-min CS conditions on nonsingular rings, and generalization to  nonsingular modules. We prove that a ring is right nonsingular right CS and left Utumi if and only if it is left nonsingular left CS and right Utumi. A nonsingular Utumi ring is right max (resp. right min, right max-min) CS if and only if it is left min (resp. left max, left max-min) CS. In addition, a semiprime nonsingular ring is right max-min CS with finite right uniform dimension if and only if it is left max-min CS with finite left uniform dimension.

\end{abstract}


\newenvironment{definition}{\begin{defn}\normalfont}{\end{defn}}


\section{Introduction}

Right-left symmetry of extending properties in associative (generally not commutative) rings is extensively studied by many authors. DV. Huynh et al. \cite{GCS} showed that a prime ring is right Goldie right CS with finite right uniform dimension at least two if and only if it is left Goldie left CS with finite left uniform dimension at least two, and a semiprime ring is right Goldie left CS if and only if it is left Goldie, right CS. Later, DV. Huynh \cite{CS1} investigated the symmetry of the CS condition on one-sided ideals in prime rings. SK. Jain et al. \cite{m-mCS} proved the right-left symmetry of the max-min CS property and nonsingularity on prime rings. In more general setting, DV. Thuat et al. \cite{thuat} studied the CS and Goldie conditions in prime and semiprime modules and their endomorphism rings. It is proved that a finite generated, quasi-projective self-generator $M$ is a prime, Goldie and CS module with uniform dimension at least two if and only if its endomorphism ring $S$ is a prime, left Goldie and left CS ring with left uniform dimension at least two; and $S$ is left Goldie and $M$ is CS if and only if $M$ is Goldie and $S$ is left CS. In the mentioned papers, primeness plays an important role to obtain the symmetric properties.  We ask here:

{\em ``If primeness is omitted, can we find some classes of rings in which CS, max CS, min CS and max-min CS properties are right-left symmetric?''}

Firstly, we provide some preliminaries in Section 2. The answer which involves our main results is presented in Section 3. There, the right-left symmetry of the extending properties (we mean the CS, max CS, min CS and max-min CS properties) is proved for the case of associative rings without primeness and even without having finite uniform dimension (see Theorem $\ref{th3.5}$ and Theorem $\ref{th3.7}$). The symmetry of the CS condition on one-sided ideals generated by idempotents is studied in Theorem $\ref{th3.12}$. In addition, the right-left symmetry of the CS, max CS, min CS, max-min CS conditions and finiteness of uniform dimension on nonsingular semiprime rings is shown in Theorem $\ref{thpr}$. Then, we apply the results to the class of nonsingular retractable modules and their endomorphism rings (see Theorem $\ref{th3.10}$, Proposition $\ref{p2}$ and Corollaries $\ref{c1}$, $\ref{c3}$ and $\ref{c4}$). Finally, some examples are discussed to guarantee that our results make sense.

\section{Preliminaries}
Throughout this paper, $R$ is an associative (generally not commutative) ring with identity,  $M$ is a unitary right $R-$module with  the endomorphism ring $S=\End(M_R).$ We denote $\ran_X(Y)$ and $\lan_X(Y)$ for the right annihilator and the left annihilator of $Y$ in $X,$ respectively. If there is no chance for misunderstanding of the space $X,$ then we simply write $\ran(Y), \lan(Y)$.

We write $X\hookrightarrow M$ (resp. $X \stackrel{*}{\hookrightarrow}M$) for a submodule (resp. an essential submodule) $X$ of $M.$ A submodule $X$ of $M$ is called a \emph{closed submodule} if $X \stackrel{*}{\hookrightarrow} Y \Rightarrow X=Y,$ for any submodule $Y$ of $M.$ A module $M$ has \emph{finite uniform dimension} if it contains no direct sum of infinitely many nonzero submodules. An $M-$annihilator $X$ of $M_R$ is a submodule provided $X=\ran_M(T)$ for some subset $T$ of $S$. If $M=R,$ then $M-$annihilators are exactly right annihilators of $R$ as usual. A \emph{Goldie module} $M$ is provided that $M$ has finite uniform dimension and $M$ satisfies the ACC (i.e. ascending chain condition) on $M-$annihilators. A \emph{right (left) Goldie} ring  $R$ is provided that $R$ has finite right (left) uniform dimension and $R$ satisfies the ACC on  right (left) annihilators. We denote the uniform dimension of a module $M_R$ by $\udim(M_R).$\\

A {\em CS} (resp. {\em uniform extending}) module is provided that every closed (resp. closed and uniform) submodule is a direct summand. $M$ is called a \emph{max CS module} if every maximal closed submodule with nonzero left annihilator in $S$  is a direct summand. $M$ is called a \emph{min CS module} if every minimal closed submodule is a direct summand. $M$ is called a \emph{max-min CS module} if it is both max CS and min CS.  $R$ is called a \emph{right max CS} (resp. \emph{right min CS, right max-min CS})  ring if $R_R$ is a max CS (resp. min CS, max-min CS)  module. Left max CS, left min CS and left max-min CS ring are defined analogously. It is clear that min CS modules are exactly uniform extending modules. If $M$ has finite uniform dimension, then $M$ is CS if and only if it is min CS. The original notion of right and left max-min CS rings may be seen in \cite{m-mCS}.

The concepts of \emph{nonsingular modules} and \emph{nonsingular rings} are understood as usual. According to \cite{khuri3}, $M$ is  a nonsingular module if and only if for any $X\hookrightarrow M,$ $\ran_R(X)\stackrel{*}{\hookrightarrow} R_R$ implies $X=0.$ $M$ is said to be  \emph{cononsingular}  if  for any $X\hookrightarrow M,$ $\lan_S(X)\stackrel{*}{\hookrightarrow}_SS$ implies $X=0.$ It is equivalent to say that $R$ is right (left) nonsingular if and only if every essential right (left) ideal of $R$ has zero left (right) annihilator. Therefore, $R$ is  right (left) nonsingular if and only if $R_R$ is a nonsingular (cononsingular) module. The following proposition is clear.

\begin{proposition} \label{p0}
The following statements hold for the module $M.$

(1) If $M$ is a nonsingular module, then for any $f\in S, \ke(f) \stackrel{*}{\hookrightarrow} M$ implies $f=0.$ Furthermore, any essential submodule of $M$ has zero left annihilator in $S.$

(2) If $M$ is a cononsingular module, then for any left ideal $K\hookrightarrow S, K \stackrel{*}{\hookrightarrow} _SS$ implies $\ran_M(K)=0.$

\end{proposition}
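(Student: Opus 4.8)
The plan is to reduce both parts to standard module-theoretic facts, since the statement is essentially bookkeeping with the definitions recorded above.

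For (1), I would argue as follows. Let $f\in S$ with $\ke(f)\stackrel{*}{\hookrightarrow}M$. The map $f$ induces an isomorphism $\im(f)\cong M/\ke(f)$ of right $R$-modules, and a quotient of a module by an essential submodule is singular; hence $\im(f)$ is singular. On the other hand $\im(f)\hookrightarrow M$, and submodules of nonsingular modules are nonsingular, so $\im(f)$ is nonsingular. A module that is simultaneously singular and nonsingular must vanish, whence $\im(f)=0$ and $f=0$. For the ``furthermore'' clause, take $N\stackrel{*}{\hookrightarrow}M$ and $f\in\lan_S(N)$. Then $fN=0$ gives $N\subseteq\ke(f)$, so $\ke(f)$ is essential and the previous sentence forces $f=0$; thus $\lan_S(N)=0$.

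For (2), I would translate the essentiality of $K$ directly into the hypothesis of cononsingularity. Put $X:=\ran_M(K)=\bigcap_{f\in K}\ke(f)$, which is a submodule of $M_R$ since each $\ke(f)$ is. By construction $KX=0$, so every element of $K$ kills $X$ on the left, i.e.\ $K\subseteq\lan_S(X)$. Since $K\stackrel{*}{\hookrightarrow}{}_SS$ and any left ideal containing an essential left ideal is again essential, we obtain $\lan_S(X)\stackrel{*}{\hookrightarrow}{}_SS$. Cononsingularity of $M$ then yields $X=0$, that is, $\ran_M(K)=0$.

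I do not expect a genuine obstacle here: the argument rests only on the facts that the quotient by an essential submodule is singular, that a submodule of a nonsingular module is nonsingular, and that enlarging an essential submodule preserves essentiality. The one point requiring a little care is to align these with the precise annihilator formulations of ``nonsingular'' and ``cononsingular'' given above (in particular, that $\im(f)$ and $\ran_M(K)$ are genuine $R$-submodules to which those definitions apply), which is exactly why the proposition is stated as being clear.
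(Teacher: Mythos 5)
Your proof is correct. Note that the paper itself offers no argument at all for this statement---it is introduced with ``The following proposition is clear''---so there is no proof of record to compare against; your write-up simply supplies the verification the authors left implicit. Both halves are sound: in (1) the chain ``$\im(f)\cong M/\ke(f)$ is singular, $\im(f)\hookrightarrow M$ is nonsingular, hence $\im(f)=0$'' is the standard argument, and the ``furthermore'' clause follows correctly since $N\subseteq\ke(f)$ forces $\ke(f)$ essential; in (2) the containment $K\subseteq\lan_S(\ran_M(K))$ together with the fact that overmodules of essential submodules are essential plugs directly into the paper's definition of cononsingularity. If one wanted to avoid invoking the singular-module machinery in (1), there is an even more self-contained route: for $m\in M$ the set $(\ke(f):m)=\{r\in R\mid mr\in\ke(f)\}$ is an essential right ideal annihilating $f(m)$, so $f(m)$ lies in the singular submodule $Z(M)=0$; but this buys little, since the facts you cite are exactly the ones the paper takes as ``understood as usual.''
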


Now, we consider the converse statements of Proposition $\ref{p0}$. According to \cite{khuri3}, a nonsingular module $M$ is called a \emph{Utumi module}  if every submodule $X$ of $M$ with zero left annihilator in $S$ is essential in $M,$ i.e. $\lan_S(X)=0 \Rightarrow X\stackrel{*}{\hookrightarrow}M.$ A cononsingular module $M$ is  called a \emph{co-Utumi module} if every left ideal $K$ of $S$ with zero right annihilator in $M$ is essential in $S,$ that is  $\ran_M(K)=0 \Rightarrow K\stackrel{*}{\hookrightarrow} _SS.$ $R$ is called a \emph{right (left) Utumi ring} if $R_R$ is a Utumi (co-Utumi) module. By a nonsingular (Utumi) ring we mean that it is right and left nonsingular (Utumi). The two following lemmas are easy.

\begin{lemma} \label{lm3.3}

If $M$ is a CS module, then $M$ is Utumi. In particular, a right CS ring is right Utumi.
\end{lemma}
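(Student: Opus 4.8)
The plan is to verify the defining condition of a Utumi module directly, using the existence of closures together with the extending property. Throughout I take $M$ to be nonsingular, since nonsingularity is built into the very definition of a ``Utumi module''; what remains to be shown is the essentiality condition, namely that $\lan_S(X)=0$ forces $X\stackrel{*}{\hookrightarrow}M$ for every submodule $X$ of $M$.

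First I would fix a submodule $X\hookrightarrow M$ with $\lan_S(X)=0$ and choose a closure $C$ of $X$ in $M$, that is, a closed submodule with $X\stackrel{*}{\hookrightarrow}C$; such a $C$ always exists by a routine Zorn's lemma argument. Because $M$ is CS, the closed submodule $C$ is a direct summand, so we may write $M=C\oplus Y$ for some submodule $Y$ of $M$.

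Next I would exploit this decomposition through its associated idempotent. Let $e\in S$ denote the projection of $M$ onto $Y$ with kernel $C$. Since $X\subseteq C=\ke(e)$, we obtain $e(X)=0$, so that $e\in\lan_S(X)$. The hypothesis $\lan_S(X)=0$ then forces $e=0$, whence $Y=\im(e)=0$ and $M=C$. Therefore $X\stackrel{*}{\hookrightarrow}C=M$, which is exactly what the Utumi condition demands.

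Finally, the ring assertion is the special case $M=R_R$, for which $S=\End(R_R)$ is identified with $R$ acting by left multiplication, and the Utumi-module condition becomes precisely the definition of a right Utumi ring; thus a right CS ring is right Utumi. I do not anticipate a genuine obstacle: the only point needing a little care is checking that the complementary projection really lies in $\lan_S(X)$, after which the argument closes immediately.
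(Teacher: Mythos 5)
Your proof is correct: the paper itself omits the argument (declaring the lemma ``easy''), and your route --- take a closure $C$ of $X$, use CS to split $M=C\oplus Y$, and observe that the projection onto $Y$ lies in $\lan_S(X)=0$, forcing $Y=0$ and $X\stackrel{*}{\hookrightarrow}M$ --- is precisely the standard argument the authors intend. You also handle the one delicate point sensibly, namely that nonsingularity is part of the definition of ``Utumi'' and must be carried as a standing hypothesis, while the essentiality condition itself follows from CS alone.
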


\begin{lemma}\label{lm3.4}

If $M$ is a nonsingular CS module, then $M$ is cononsingular. In particular, a right nonsingular right CS ring is left nonsingular.
\end{lemma}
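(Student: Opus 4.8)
My plan is to prove the module statement directly and then read off the ring statement by taking $M=R_R$, since then $S=\End(R_R)$ acts by left multiplication, $M$ nonsingular means $R$ is right nonsingular, $M$ CS means $R$ is right CS, and $M$ cononsingular means $R$ is left nonsingular (as recorded in the preliminaries). So let $X\hookrightarrow M$ be a submodule with $\lan_S(X)\stackrel{*}{\hookrightarrow}{}_SS$; the goal is to force $X=0$. First I would pass to a closure: every submodule admits a closure, so pick a closed $\bar X$ with $X\stackrel{*}{\hookrightarrow}\bar X$. Since $M$ is CS, $\bar X$ is a direct summand, say $M=\bar X\oplus Y$, and I fix the idempotent $e\in S$ with $\im(e)=\bar X$ and $\ke(e)=Y$.

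The crux of the argument is to show $\lan_S(X)=\lan_S(\bar X)$; this is the step where nonsingularity does the real work, and it is the part I expect to be the main obstacle. The inclusion $\lan_S(\bar X)\subseteq\lan_S(X)$ is automatic from $X\subseteq\bar X$. For the reverse, take $f\in\lan_S(X)$ and set $h=fe\in S$. Because $e$ kills $Y$ and restricts to the identity on $\bar X$, one computes $\ke(h)=\ke(f|_{\bar X})\oplus Y$, and since $\ke(f|_{\bar X})\supseteq X\stackrel{*}{\hookrightarrow}\bar X$ we get $\ke(f|_{\bar X})\stackrel{*}{\hookrightarrow}\bar X$, whence $\ke(h)\stackrel{*}{\hookrightarrow}\bar X\oplus Y=M$ by the standard fact that an essential-summand-wise direct sum is essential. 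Now nonsingularity enters through Proposition \ref{p0}(1): $\ke(h)\stackrel{*}{\hookrightarrow}M$ forces $h=0$, i.e. $fe=0$, so $f\bar X=f\,\im(e)=0$ and $f\in\lan_S(\bar X)$. This gives the desired equality.

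Finally I would exploit that $\bar X$ is a summand. Since $f\bar X=0$ is equivalent to $fe=0$, one identifies $\lan_S(\bar X)=S(1-e)$, and $S=S(1-e)\oplus Se$ as left $S$-modules, so $\lan_S(X)=S(1-e)$ is a direct summand of ${}_SS$. But $\lan_S(X)$ is also essential in ${}_SS$ by hypothesis, and an essential direct summand must be the whole module; hence $S(1-e)={}_SS$, forcing $Se=0$, so $e=0$ and $\bar X=\im(e)=0$. Therefore $X\subseteq\bar X=0$, proving $M$ is cononsingular. Specializing to $M=R_R$ yields that a right nonsingular right CS ring is left nonsingular.
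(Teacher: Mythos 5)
Your proof is correct. The paper actually offers no proof of this lemma at all (it is dismissed with ``The two following lemmas are easy''), so there is nothing to compare against; your argument is the natural one the authors presumably had in mind: close $X$ to a summand $\bar X=e(M)$, use nonsingularity via Proposition \ref{p0}(1) to show $\lan_S(X)=\lan_S(\bar X)=S(1-e)$, and then play essentiality of $\lan_S(X)$ against the decomposition $S=Se\oplus S(1-e)$ to force $e=0$. A minor streamlining you might note: you can skip the equality $\lan_S(X)=\lan_S(\bar X)$ entirely by observing that any $f\in Se\cap\lan_S(X)$ kills $X\oplus(1-e)M$, which is essential in $M$, so $f=0$ by nonsingularity; then $Se\cap\lan_S(X)=0$ together with essentiality of $\lan_S(X)$ gives $Se=0$ at once.
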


For a submodule $X$ of $M,$ we write $I_X:=\{f\in S|f(M)\subseteq X\}.$ For a subset $K$ of $S,$ we write $KM=K(M):=\sum\limits_{f\in K}f(M).$ It is clear that $I_X$ is a right ideal of $S$ and $KM$ is a submodule of $M.$ The two following conditions  are introduced and investigated in  \cite{khuri-nd, khuri4}.

\begin{itemize}
  \item (I) For submodules $X, Y$ of $M, X \stackrel{*}{\hookrightarrow} Y$ if and only if $I_X\stackrel{*}{\hookrightarrow}I_Y.$
  \item (II) For right ideals $K, L$ of $S, K \stackrel{*}{\hookrightarrow} L$ if and only if $KM\stackrel{*}{\hookrightarrow}LM.$
\end{itemize}

We observe that every finitely generated, quasi-projective self-generator is retractable and it possesses (I) and (II) (see \cite[Lemma 2.2]{thuat}). The same assertion holds for nondegenerate modules (see \cite{khuri-nd}). In the following lemma, we sum up \cite[Theorem 2.2]{khuri4} and \cite[Theorem 2.5]{khuri4} to make a tool to prove our main results in the
subsequent section.

\begin{lemma} (see \cite{khuri4}) \label{lm3.9}

  (1) If $M$ is a nonsingular and retractable module, then (I) holds.

  (2) Given the condition (I), then the condition (II) holds if and only if $K\stackrel{*}{\hookrightarrow}I_{KM}$ for every right ideal $K\hookrightarrow S_S.$

  (3) Given the condition (II), then the condition (I) holds if and only if $I_X(M)\stackrel{*}{\hookrightarrow}X$ for every submodule $X\hookrightarrow M.$
\end{lemma}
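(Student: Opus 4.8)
The plan is to isolate part (1) as the only place where nonsingularity and retractability are genuinely used, and then to derive parts (2) and (3) formally from two ``closure identities'' together with the standard sandwiching law for essential extensions: if $A\subseteq B\subseteq C$ then $A\stackrel{*}{\hookrightarrow}C$ holds if and only if $A\stackrel{*}{\hookrightarrow}B$ and $B\stackrel{*}{\hookrightarrow}C$. Throughout I read condition (I) (resp. (II)) as an assertion about nested pairs $X\subseteq Y$ (resp. $K\subseteq L$), so that only the essentiality, not the containment, is in question. First I would record the retractability fact that $I_X(M)\stackrel{*}{\hookrightarrow}X$ for every submodule $X$: for a nonzero submodule $N\subseteq X$, retractability yields $0\ne h\in\Hom(M,N)\subseteq I_X$ with $0\ne h(M)\subseteq N$, so $I_X(M)\cap N\ne 0$. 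The reverse implication of (I) then follows from this alone: given $I_X\stackrel{*}{\hookrightarrow}I_Y$ and a nonzero $L\subseteq Y$, retractability gives $0\ne h\in I_Y$ with $h(M)\subseteq L$; essentiality produces $g\in S$ with $0\ne hg\in I_X$, whence $0\ne hg(M)\subseteq X\cap L$, so $X\stackrel{*}{\hookrightarrow}Y$.

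The forward implication of (I) is the main obstacle, and it is precisely where nonsingularity enters. Fix $X\stackrel{*}{\hookrightarrow}Y$ and $0\ne f\in I_Y$; I must find $g\in S$ with $0\ne fg\in I_X$. The preimage $f^{-1}(X)$ is essential in $M$ (a standard computation using $f(M)\subseteq Y$ and $X\stackrel{*}{\hookrightarrow}Y$). Since $f\ne 0$ and $M$ is nonsingular, Proposition \ref{p0}(1) forbids $\ke(f)\stackrel{*}{\hookrightarrow}M$, so there is a nonzero $P\subseteq M$ with $P\cap\ke(f)=0$; then $P':=P\cap f^{-1}(X)$ is nonzero (as $f^{-1}(X)$ is essential) and still meets $\ke(f)$ trivially. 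Retracting $M$ onto $P'$ gives $0\ne g\in S$ with $g(M)\subseteq P'\subseteq f^{-1}(X)$, so $fg(M)\subseteq X$; and $g(M)\cap\ke(f)=0$ together with $g(M)\ne 0$ forces $fg\ne 0$. This is exactly the point where, without nonsingularity, one could not rule out $g(M)\subseteq\ke(f)$ and hence $fg=0$.

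For parts (2) and (3) I would use the closure identities $(I_{KM})M=KM$ and $I_{I_X(M)}=I_X$, each of which follows at once from $K\subseteq I_{KM}$ and $I_X(M)\subseteq X$. In part (2), applying (II) to the pair $(K,I_{KM})$ turns the stated condition $K\stackrel{*}{\hookrightarrow}I_{KM}$ into the trivially true $KM\stackrel{*}{\hookrightarrow}KM$, which gives its necessity; conversely, assuming $K\stackrel{*}{\hookrightarrow}I_{KM}$ for all $K$, I translate each instance of (II) through (I) (which converts $KM\stackrel{*}{\hookrightarrow}LM$ into $I_{KM}\stackrel{*}{\hookrightarrow}I_{LM}$) and then close up with the sandwiching law along $K\subseteq I_{KM}\subseteq I_{LM}$ and $K\subseteq L\subseteq I_{LM}$.

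Part (3) is the exact dual: applying (I) to the pair $(I_X(M),X)$ reduces the condition $I_X(M)\stackrel{*}{\hookrightarrow}X$ to $I_X\stackrel{*}{\hookrightarrow}I_X$, proving necessity, and assuming $I_X(M)\stackrel{*}{\hookrightarrow}X$ for all $X$ one runs the same sandwiching argument with the roles of (I) and (II) interchanged, using $I_X(M)\subseteq X\subseteq Y$ and $I_X(M)\subseteq I_Y(M)\subseteq Y$ to pass between essentiality in $M$ and essentiality in $S$ via (II). Once the closure identities and the observation $I_X(M)\stackrel{*}{\hookrightarrow}X$ are in hand, parts (2) and (3) are purely formal, so the whole weight of the lemma rests on the nonsingularity argument in the forward half of part (1).
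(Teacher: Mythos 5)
There is an important mismatch of expectations here: the paper does not prove this lemma at all. It is explicitly presented as a summary of Theorems 2.2 and 2.5 of Khuri \cite{khuri4}, quoted as a tool, so there is no internal proof to compare your attempt against; the only possible comparison is with the cited source. With that caveat, your reconstruction is correct and self-contained, and I could not find a gap. The forward half of (1) is handled exactly where the real content lies: for $0\neq f\in I_Y$ with $X\stackrel{*}{\hookrightarrow}Y$, Proposition \ref{p0}(1) makes $\ke(f)$ non-essential, $f^{-1}(X)$ is essential in $M$, and retracting onto $P'=P\cap f^{-1}(X)$ (where $P\neq 0$, $P\cap\ke(f)=0$) yields $g$ with $0\neq fg\in I_X$; your observation that the reverse half of (I), and the auxiliary fact $I_X(M)\stackrel{*}{\hookrightarrow}X$, need only retractability is also correct. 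The closure identities $(I_{KM})(M)=KM$ and $I_{I_X(M)}=I_X$ both hold, and the sandwiching derivations of (2) and (3) check out in each direction: applying (II) to the pair $K\subseteq I_{KM}$ collapses $K\stackrel{*}{\hookrightarrow}I_{KM}$ to the tautology $KM\stackrel{*}{\hookrightarrow}KM$, while the converse halves follow by transitivity and sandwiching along $K\subseteq I_{KM}\subseteq I_{LM}$ and $K\subseteq L\subseteq I_{LM}$, respectively $I_X(M)\subseteq X\subseteq Y$ and $I_X(M)\subseteq I_Y(M)\subseteq Y$. In fact your argument proves slightly more than stated: the ``only if'' half of (2) uses only (II), the ``only if'' half of (3) uses only (I), and since retractability alone already gives $I_X(M)\stackrel{*}{\hookrightarrow}X$, condition (I) is automatic from (II) for any retractable module. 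This is a useful sharpening for readers who do not have \cite{khuri4} at hand, and it isolates correctly that the sole use of nonsingularity in the whole lemma is to prevent a nonzero endomorphism from having essential kernel.
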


\section{The main results}

We agree an abbreviation that MRQR and MLQR indicate maximal right quotient ring and maximal left quotient ring, successively. According to \cite{khuri4}, $M$ is a {\em retractable module} if and only if $\Hom(M, X)\neq 0$ for every $0\neq X\hookrightarrow M.$ We denote the injective hull (or the envelope) of $M$ by $E(M),$ and the endomorphism ring of $E(M)$ by $T=\End_R(E(M)).$ The following lemma plays an important role in our investigation.

\begin{lemma} \label{lm3.2}  \cite[Theorem 2]{khuri3}

Let $M$ be a retractable, nonsingular and cononsingular module. Then, $T=\End_R(E(M))$  is both the MRQR and the MLQR of $S=\End(M_R)$ if and only if $M$ is both a Utumi and co-Utumi module. In particular, for a nonsingular ring $R,$ the MRQR and the MLQR of $R$ coincide if and only if $R$ is right and left Utumi.
\end{lemma}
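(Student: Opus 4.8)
The plan is to realize $T=\End_R(E(M))$ as a two-sided ring of quotients of $S=\End(M_R)$ and then to show that the passage from \emph{a} quotient ring to the \emph{maximal} one, simultaneously on the right and the left, is controlled exactly by the Utumi and co-Utumi conditions. First I would fix the canonical embedding $S\hookrightarrow T$. Since $M$ is nonsingular, its injective hull $E=E(M)$ is again nonsingular, so each $f\in S$ admits a \emph{unique} extension $\bar f\in T$: existence is the injectivity of $E$, and uniqueness holds because the difference of two extensions vanishes on the essential submodule $M$ and hence is $0$ by Proposition~$\ref{p0}(1)$ applied to $E$. This makes $S$ a subring of $T$, and $E$ a nonsingular injective module whose endomorphism ring $T$ is von Neumann regular and self-injective. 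Moreover one checks that nonsingularity and retractability force $S$ to be right nonsingular, while the cononsingular hypothesis makes $S$ left nonsingular (if $Ks=0$ then $s(M)\subseteq\ran_M(K)$), so on each side the maximal quotient ring of $S$ is the injective hull of $S$ carrying its natural ring structure; thus $T$ is the MRQR iff $T_S\cong E(S_S)$ and the MLQR iff ${}_ST\cong E({}_SS)$.

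Next I would show that $T$ is \emph{automatically} a right ring of quotients of $S$. The key observation is that for every $t\in T$ the preimage $t^{-1}(M)\cap M$ is essential in $M$ (the preimage of an essential submodule under any $R$-endomorphism of $E$ is essential, and essentiality descends from $E$ to $M$). Then retractability and Lemma~$\ref{lm3.9}(1)$ (condition (I)) show that the right ideal $J:=I_{t^{-1}(M)\cap M}$ is essential in $S_S$ and satisfies $tJ\subseteq S$, while a short nonsingularity argument ($JM\stackrel{*}{\hookrightarrow}M$ forces $t(JM)\neq0$) gives $tJ\neq0$; hence $S_S\stackrel{*}{\hookrightarrow}T_S$. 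The genuinely harder structure is on the left: exhibiting $J'$ an essential \emph{left} ideal with $\bar s\,t(M)\subseteq M$ for $s\in J'$ is not formal, and this is where the cononsingular hypothesis is brought in through the dual correspondence $K\mapsto\ran_M(K)$ together with Lemma~$\ref{lm3.9}(3)$.

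The heart of the argument, and the main obstacle, is to decide when this quotient ring is \emph{maximal} on both sides, i.e. (since $S$ is nonsingular) when $T_S$ and ${}_ST$ are injective over $S$; it is here, and only jointly, that Utumi and co-Utumi enter. The mechanism is that nonsingularity together with the Utumi property collapses the distinction between $\lan_S(X)=0$ and $X\stackrel{*}{\hookrightarrow}M$, which upgrades condition (I) to condition (II) via the criterion $K\stackrel{*}{\hookrightarrow}I_{KM}$ of Lemma~$\ref{lm3.9}(2)$, while cononsingularity together with co-Utumi collapses $\ran_M(K)=0$ and $K\stackrel{*}{\hookrightarrow}{}_SS$ and, through Lemma~$\ref{lm3.9}(3)$, supplies the symmetric statement on the left. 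For the implication $(\Leftarrow)$ I would use these equivalences to extend every $S$-homomorphism from an essential one-sided ideal of $S$ into $T$ to all of $T$ — pulling the extension problem back along the injectivity of $E_R$ through the maps $I_\bullet$ and $\ran_M(\cdot)$ — which makes $T_S$ and ${}_ST$ injective, so $T$ is both the MRQR and the MLQR. For $(\Rightarrow)$ I would run the same equivalences backward, reading off from the rational completeness of $T$ on \emph{both} sides that the two correspondences preserve essentiality, which is exactly the Utumi and co-Utumi conditions. The delicate point is that the ``denseness'' demanded by a maximal quotient ring matches honest essentiality only under these hypotheses, and that the two injectivity conditions are entangled, so the converse must use that $T$ is maximal on \emph{both} sides at once rather than on one side in isolation.

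Finally, the ``in particular'' clause is the case $M=R$, where $S\cong R$ and $T=\End_R(E(R_R))$ is canonically isomorphic to the MRQR of $R$ (the injective hull $E(R_R)$ with its induced multiplication, obtained by the extension map $\phi\mapsto\phi(1)$). Here the right-hand maximality is automatic, so the whole content reduces to the classical statement that this ring is \emph{also} the MLQR of $R$ precisely when $R$ is right and left Utumi, that is, when $R_R$ is simultaneously a Utumi and a co-Utumi module, in agreement with the general equivalence.
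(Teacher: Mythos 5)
A preliminary remark: the paper offers no proof of this lemma at all --- it is imported verbatim from \cite[Theorem 2]{khuri3} --- so your attempt has to be measured against Khuri's argument rather than against anything in the text. Your first two paragraphs are essentially correct and follow the standard route: the unique extension of each $f\in S$ to $E(M)$ (uniqueness by nonsingularity of $E(M)$) embeds $S$ in $T$; $S$ is right nonsingular by \cite[Theorem 3.1]{khuri} and left nonsingular by cononsingularity; and for $t\in T$ the right ideal $I_{t^{-1}(M)\cap M}$ is essential in $S_S$ (condition (I)) and carries $t$ into $S$, so $S_S$ is essential, hence dense, in $T_S$, and $T$ is the MRQR of $S$ since $T$ is regular and right self-injective. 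Up to this point you agree with what is known.

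The gap is in your third paragraph, which is where the entire content of the theorem sits. (a) The asserted ``mechanism'' --- that nonsingularity plus Utumi ``upgrades condition (I) to condition (II) via the criterion $K\stackrel{*}{\hookrightarrow}I_{KM}$ of Lemma $\ref{lm3.9}$(2)'' --- is never proved, and you should not expect it to be available: the equivalence $\lan_S(X)=0\Leftrightarrow X\stackrel{*}{\hookrightarrow}M$ says nothing about approximating a map $f$ with $f(M)\subseteq KM$ by elements of the right ideal $K$, which is what $K\stackrel{*}{\hookrightarrow}I_{KM}$ demands. Indeed, this very paper assumes (II) as a \emph{separate} hypothesis on top of ``Utumi and co-Utumi'' in Proposition $\ref{p1}$ and Corollaries $\ref{c3}$ and $\ref{c4}$; if your implication held, those hypotheses would be redundant. (b) Conditions (I), (II) and all of Lemma $\ref{lm3.9}$ concern submodules of $M$ and \emph{right} ideals of $S$ via the correspondences $X\mapsto I_X$ and $K\mapsto KM$; they say nothing about left ideals of $S$, so they cannot ``supply the symmetric statement on the left.'' The left-sided density of $S$ in $T$ is governed by the annihilator correspondences $X\mapsto\lan_S(X)$ and $K\mapsto\ran_M(K)$, and handling those is precisely where the work lies. (c) Both directions of the equivalence are then compressed into single sentences (``extend every $S$-homomorphism\dots pulling the extension problem back,'' ``run the same equivalences backward'') with no actual construction. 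The known proof instead reduces the statement to Utumi's classical theorem \cite{utumi} for the two-sided nonsingular ring $S$ (its MRQR and MLQR coincide if and only if $S$ is a right and left Utumi \emph{ring}), and then proves the translation lemmas ``$S$ left Utumi $\Leftrightarrow M$ co-Utumi'' (this is \cite[Lemma 4]{khuri3}, cited again in the proof of Corollary $\ref{c1}$; e.g. $\ran_S(K)=0$ forces $\ran_M(K)=0$ by retracting $M$ onto $\ran_M(K)$) and ``$S$ right Utumi $\Leftrightarrow M$ Utumi,'' using retractability, condition (I) and (co)nonsingularity. Your proposal neither establishes these translations nor invokes the ring theorem --- except in your final paragraph, where for $M=R$ you appeal to ``the classical statement,'' i.e. to exactly the fact your general argument was supposed to deliver; this makes the ring case circular and leaves the module case unproved.
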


Note that in the case of Lemma $\ref{lm3.2}$, if $Q$ is the MRQR and the MLQR of $R$, then $Q$ is also the injective hull of $R_R$ and $_RR$. Therefore, $Q$ is von Neumann regular, right and left self-injective. Moreover, by \cite[Lemma 1.4]{utumi}, $Q$ can be regarded as the ring consisting of element $x$ such that the set of $y\in R$ with $xy\in R$ forms an essential right ideal of $R.$ This notation will serve us in proof of subsequent theorems. Under the aid of the conditions (I) and (II), we derive the following results.

\begin{lemma} \label{lm3.13}
 Let $M$ be a retractable module which possesses (I) and (II). Then, $\udim(M_R)=n$ if and only if $\udim(S_S)=n$, where $n\geq 0$ is an integer.
\end{lemma}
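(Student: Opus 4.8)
The plan is to translate the invariant ``uniform dimension $\geq k$'' across the two correspondences $X\mapsto I_X$ and $K\mapsto KM$. Concretely, I would prove that for every integer $k\geq 1$ one has $\udim(M_R)\geq k$ if and only if $\udim(S_S)\geq k$; since ``$\udim=n$'' means ``$\udim\geq n$ but not $\udim\geq n+1$'', the stated equivalence then follows at once (the degenerate case $n=0$ being trivial, as $S$ has identity $\mathrm{id}_M$, so $M=0$ iff $S=0$). Throughout I will use the elementary facts $I_M=S$, $SM=M$, that $f\neq 0$ forces $f(M)\neq 0$, and that retractability gives $I_X=\Hom(M,X)\neq 0$ whenever $0\neq X\hookrightarrow M$. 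Recall that ``the sum $\sum X_i$ is direct'' is my working form of independence, and that $\udim\geq k$ means there is a direct sum of $k$ nonzero summands.

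For the direction $\udim(M_R)\geq k\Rightarrow\udim(S_S)\geq k$, I would take nonzero submodules $X_1,\dots,X_k$ whose sum is direct and pass to the right ideals $I_{X_1},\dots,I_{X_k}$ of $S$. These are nonzero by retractability, and their sum is direct: if $f\in I_{X_i}\cap\sum_{j\neq i}I_{X_j}$, writing $f=\sum_{j\neq i}g_j$ with $g_j\in I_{X_j}$ gives $f(M)\subseteq X_i$ and $f(M)=\sum_{j\neq i}g_j(M)\subseteq\sum_{j\neq i}X_j$, so $f(M)\subseteq X_i\cap\sum_{j\neq i}X_j=0$ and $f=0$. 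This direction is purely formal and needs only retractability, not conditions (I), (II).

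The reverse direction $\udim(S_S)\geq k\Rightarrow\udim(M_R)\geq k$ is where the full strength of (I) and (II) enters, via Lemma $\ref{lm3.9}$(2), which supplies $K\stackrel{*}{\hookrightarrow}I_{KM}$ for every right ideal $K\hookrightarrow S_S$. Given nonzero right ideals $K_1,\dots,K_k$ with direct sum, set $L_i=\sum_{j\neq i}K_j$, so that $K_i\cap L_i=0$ and $L_iM=\sum_{j\neq i}K_jM$. Each $K_iM$ is nonzero (any $0\neq f\in K_i$ has $f(M)\neq 0$), so it suffices to show $K_iM\cap L_iM=0$. I would argue by contradiction using the identity $I_{AM}\cap I_{BM}=I_{AM\cap BM}$ (immediate from the definition of $I_{(-)}$): if $K_iM\cap L_iM\neq 0$, then retractability gives $0\neq I_{K_iM\cap L_iM}=I_{K_iM}\cap I_{L_iM}$. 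Since $K_i\stackrel{*}{\hookrightarrow}I_{K_iM}$, the nonzero right ideal $I_{K_iM}\cap I_{L_iM}\subseteq I_{K_iM}$ meets $K_i$, yielding $0\neq g\in K_i\cap I_{L_iM}$. Then $gS$ is a nonzero right ideal contained in $I_{L_iM}$, and $L_i\stackrel{*}{\hookrightarrow}I_{L_iM}$ forces $0\neq gs\in L_i$ for some $s\in S$; but $gs\in K_i$ as well, contradicting $K_i\cap L_i=0$. Hence the family $\{K_iM\}$ is direct and $\udim(M_R)\geq k$.

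The main obstacle is precisely this last direction: independence of right ideals does not transfer to independence of the submodules $K_iM$ by any formal manipulation, and the chase above is what forces me to invoke $K\stackrel{*}{\hookrightarrow}I_{KM}$ together with retractability. Once both inequalities are established for all $k$, they combine to give $\udim(M_R)=\udim(S_S)$, completing the proof.
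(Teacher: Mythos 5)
Your proposal is correct, and its two core transfers are exactly those in the paper's proof: independence of submodules passes to the right ideals $I_{X_i}$ using retractability alone, and independence of right ideals passes to the submodules $K_iM$ via the essentiality $K\stackrel{*}{\hookrightarrow}I_{KM}$ from Lemma \ref{lm3.9}, with the same chase (find $0\neq g\in K_i\cap I_{L_iM}$, then use essentiality of $L_i$ in $I_{L_iM}$ to push $gS$ into $L_i$ nontrivially); the paper runs this chase for a pair of right ideals $K,H$, whereas you run it against the sum of the remaining ideals, an inessential difference. Where you genuinely differ is the endgame. The paper additionally proves an essentiality transfer---$A\oplus B\stackrel{*}{\hookrightarrow}M$ implies $I_A\oplus I_B\stackrel{*}{\hookrightarrow}S_S$, and $K\oplus H\stackrel{*}{\hookrightarrow}S_S$ implies $K(M)\oplus H(M)\stackrel{*}{\hookrightarrow}M_R$---and then obtains the exact dimension count by an induction on essential direct sums that it leaves implicit (``we inductively induce''). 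You avoid all essentiality bookkeeping by invoking the characterization of uniform dimension as $\sup\{k:\ M \text{ contains a direct sum of } k \text{ nonzero submodules}\}$, so that the two inequalities $\udim\geq k$, each way, suffice. Your route is tighter and actually fills in the step the paper waves at; its only extra cost is the appeal to that sup characterization of Goldie dimension, a standard but nontrivial fact which you should cite explicitly (it is what guarantees that ``$\geq n$ but not $\geq n{+}1$'' pins the dimension at exactly $n$), while the paper's essentiality transfer is what it uses in place of this fact.
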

\begin{proof}
We give some observations before mutually converting finiteness of uniform dimension between $M_R$ and $S_S.$ For nonzero right ideals $K, H$ of $S,$ we have $K(M)\neq 0$ and $H(M)\neq 0$. Moreover, we claim that $K\cap H=0$ if and only if $K(M)\cap H(M)=0$. We assume that $K\cap H=0$ and $K(M)\cap H(M)=Y\neq 0$. Then, by retractability of $M,$ there exists $0\neq s\in \Hom(M, Y)$, whence $s\in I_{KM}\cap I_{HM}$. By Lemma $\ref{lm3.9}$, we have $K\overset{*}{\hookrightarrow}I_{KM}$ and $H\overset{*}{\hookrightarrow}I_{HM}$. Therefore, there exists $f\in S$ such that $0\neq sf\in K$ and $sfS\cap H\neq 0.$ This means $K\cap H\neq 0,$ a contradiction. Thus, we must have $K(M)\cap H(M)=0.$ The converse is proved similarly.

For nonzero submodules $A$ and $B$ of $M,$ we see that $I_A\neq 0, I_B\neq 0$. Moreover, $A\cap B=0$ if and only if $I_A\cap I_B=0$ because of retractability of $M$. If we have $A\oplus B$, then we also get $I_A\oplus I_B$. It is obvious that $(I_A\oplus I_B)(M)\subseteq I_A(M)\oplus I_B(M)$. For $z=a+b\in I_A(M)\oplus I_B(M),$ where $a\in I_A(M), b\in I_B(M),$ there are $f_A\in I_A, f_B\in I_B$ and $x, y\in M$ such that $a=f_A(x), b=f_B(y).$ We see that $a\in f_A(M)\subset (I_A\oplus I_B)(M)$ and $b\in f_B(M)\subset (I_A\oplus I_B)(M)$. This implies $a+b\in (I_A\oplus I_B)(M)$, whence $I_A(M)\oplus I_B(M)\subseteq (I_A\oplus I_B)(M)$. Therefore, we get $(I_A\oplus I_B)(M)=I_A(M)\oplus I_B(M)$.

Now, let $A$ and $B$ be submodules of $M$ with $A\oplus B\overset{*}{\hookrightarrow}M$. Then, we have $(I_A\oplus I_B)(M)=I_A(M)\oplus I_B(M)$. On the other hand, by Lemma $\ref{lm3.9}, I_A(M)\oplus I_B(M)\overset{*}{\hookrightarrow}A\oplus B$ and $I_A\oplus I_B\overset{*}{\hookrightarrow}I_{A\oplus B}\overset{*}{\hookrightarrow}S_S$. Similarly, for right ideals $K, H$ of $S,$ if $K\oplus H\overset{*}{\hookrightarrow} S_S,$ then $(K\oplus H)(M)\overset{*}{\hookrightarrow} M_R$ and hence $K(M)\oplus H(M)\overset{*}{\hookrightarrow} M_R$. By these arguments, we inductively induce that for any integer $n\geq 0, \udim(M_R)=n$ if and only if $\udim(S_S)=n$.
\end{proof}

\begin{proposition} \label{p1}
  Let $M$ be a nonsingular and co-nonsingular, Utumi and co-Utumi retractable module which possesses (II). Then, we have $\udim(M_R)=n$ if and only if $\udim(S_S)=n$ if and only if $\udim(_SS)=n$, where $n\geq 0$ is an integer. In this case, $M_R, S_S$ and $_SS$ are Goldie modules.

  In particular, let $R$ be a nonsingular Utumi ring. Then, $\udim(R_R)=n$ if and only if $\udim(_RR)=n$. In this case, $R$ is right and left Goldie.
\end{proposition}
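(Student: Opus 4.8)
The plan is to read off $\udim(M_R)=\udim(S_S)$ from Lemma~\ref{lm3.13}, then to compare $\udim(S_S)$ with $\udim(_SS)$ through the common maximal quotient ring furnished by Lemma~\ref{lm3.2}, and finally to promote finiteness of the uniform dimension to the Goldie property using nonsingularity. Since $M$ is nonsingular and retractable, condition (I) holds by Lemma~\ref{lm3.9}(1), and (II) is assumed; hence Lemma~\ref{lm3.13} gives, for every integer $n\geq 0$, that $\udim(M_R)=n$ if and only if $\udim(S_S)=n$ (so that the infinite case on one side matches the infinite case on the other).

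Next I would invoke Lemma~\ref{lm3.2}. As $M$ is retractable, nonsingular, cononsingular, Utumi and co-Utumi, the ring $T=\End_R(E(M))$ is simultaneously the MRQR and the MLQR of $S$; as recorded after Lemma~\ref{lm3.2}, $T$ is then von Neumann regular, right and left self-injective, and is the injective hull of both $S_S$ and $_SS$. In particular $S$ is right and left nonsingular, and, because the maximal one-sided quotient ring of a nonsingular ring preserves the uniform dimension on that side, $\udim(S_S)=\udim(T_T)$ and $\udim(_SS)=\udim(_TT)$. When $n$ is finite, a von Neumann regular ring of finite right uniform dimension is semisimple Artinian, so $T$ is semisimple Artinian and $\udim(T_T)=\udim(_TT)=n$; thus $\udim(S_S)=\udim(_SS)=n$, and together with the previous paragraph all three of $\udim(M_R),\udim(S_S),\udim(_SS)$ coincide (the infinite case following by contraposition). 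This is the asserted three-way equivalence.

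For the Goldie conclusion only the ACC on annihilators remains, finite uniform dimension being already in hand. The point is that in a nonsingular module every annihilator submodule is closed: if $X=\ran_M(T')$ with $T'\subseteq S$ and $X\stackrel{*}{\hookrightarrow}Y$, then for $y\in Y$ the right ideal $(X:y)=\{r\in R\mid yr\in X\}$ is essential in $R_R$ (it is the preimage of the essential submodule $X\cap yR$ of $yR$ under $r\mapsto yr$), so for each $f\in T'$ one has $(X:y)\subseteq\ran_R(f(y))$; nonsingularity of $M$ forces $f(y)=0$, whence $y\in X$ and $X=Y$. The same computation over $S_S$ and over $_SS$ (using that $S$ is right and left nonsingular) shows that right and left annihilators of $S$ are closed. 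Since in a module of finite uniform dimension every strictly ascending chain of closed submodules has length at most the uniform dimension, the ACC on closed submodules holds, and therefore so does the ACC on $M$-annihilators and on the one-sided annihilators of $S$. Hence $M_R$, $S_S$ and $_SS$ are Goldie.

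The ``in particular'' statement is the case $M=R$: here $S=\End(R_R)\cong R$, $_SS={}_RR$, and $I_X=X$ for every right ideal $X$, so (I) and (II) hold trivially and the hypotheses collapse to ``$R$ is a nonsingular Utumi ring''; the conclusion then reads $\udim(R_R)=\udim(_RR)$ with $R$ right and left Goldie. The main obstacle is the middle step, the transfer of uniform dimension between $S_S$, $_SS$ and $T$: one must justify that the maximal right (left) quotient ring of a nonsingular ring has the same right (left) uniform dimension as the ring itself --- equivalently, that closed one-sided ideals of $S$ correspond to direct summands of $T$ --- and that $T$, being von Neumann regular and, in the finite case, semisimple Artinian, has equal left and right uniform dimension. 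The essential extensions $S_S\stackrel{*}{\hookrightarrow}T_S$ and $_SS\stackrel{*}{\hookrightarrow}{}_ST$ together with the two-sided self-injectivity of $T$ are what let the two sides communicate.
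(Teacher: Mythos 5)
Your proposal is correct and follows essentially the same route as the paper's proof: condition (I) via Lemma~\ref{lm3.9}, the equality $\udim(M_R)=\udim(S_S)$ via Lemma~\ref{lm3.13}, the passage from $\udim(S_S)$ to $\udim(_SS)$ through the common maximal quotient ring $T$ of Lemma~\ref{lm3.2}, and the Goldie conclusion from nonsingularity plus finite uniform dimension. You simply supply more detail where the paper is terse (the semisimple-Artinian argument for transferring dimension across $T$, and the observation that annihilators are closed so that finite uniform dimension yields the ACC), but the skeleton is identical.
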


\begin{proof}
Since $M$ is nonsingular and retractable, Lemma $\ref{lm3.9}$ asserts that $M$ possesses (I). By Lemma $\ref{lm3.13}$, we have $\udim(M_R)=n=\udim(S_S).$ By \cite[Theorem 3.1]{khuri}, nonsingularity of $M$ implies right nonsingularity of $S$. Since $M$ is co-nonsingular, by \cite[Proposition 1]{khuri3}, $S$ is left nonsingular, so it is nonsingular. By Lemma $\ref{lm3.2}$,  $T$ is both the MRQR and the MLQR of $S.$ Thus, we have $n=\udim(S_S)=\udim(T_S)=\udim(_ST)=\udim(_SS).$

Since $M$ is nonsingular module with finite uniform dimension, $M$ satisfies the ACC on $M-$annihilators. Thus, $M$ is a Goldie module. Since $S$ is nonsingular with finite right and left uniform dimensions, $S$ satisfies the ACC on right and left annihilators. Thus, $S$ is right and left Goldie.

\end{proof}

\begin{theorem} \label{th3.5}
The following statements are equivalent for a ring $R.$

(1) $R$ is a right nonsingular, right CS and left Utumi ring;

(2) $R$ is a left nonsingular, left CS and right Utumi ring.

In this case, if either $R_R$ or $_RR$ has finite uniform dimension, then $R$ is a right and left Goldie ring.
\end{theorem}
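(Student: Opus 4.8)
The plan is to prove $(1)\Rightarrow(2)$ and then invoke the evident left–right symmetry of the two statements for the reverse implication. Assume $(1)$. First I would collect the ``free'' consequences: by Lemma \ref{lm3.4}, a right nonsingular right CS ring is left nonsingular, so $R$ is nonsingular on both sides; by Lemma \ref{lm3.3}, right CS forces right Utumi, and together with the hypothesis that $R$ is left Utumi this makes $R$ a nonsingular Utumi ring. Lemma \ref{lm3.2} then applies: the MRQR and MLQR coincide in a common ring $Q$, which (by the remark following that lemma) is von Neumann regular, right and left self-injective, and is simultaneously the injective hull of $R_R$ and of ${}_RR$. All that remains is to produce the left CS property.

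The decisive tool is the annihilator correspondence between closed one-sided ideals carried by $Q$. For left nonsingular $R$ the closed left ideals are exactly the contractions $Qf\cap R$ with $f^2=f\in Q$, and for right nonsingular $R$ each $eQ\cap R$ with $e^2=e\in Q$ is a closed right ideal. The key computation I would carry out is the identity $\lan_R(eQ\cap R)=Q(1-e)\cap R$ for every idempotent $e\in Q$. The inclusion $\supseteq$ is immediate, since $xe=0$ annihilates $eQ\supseteq eQ\cap R$. For $\subseteq$, given $x\in R$ with $x(eQ\cap R)=0$, I would use that $eQ\cap R$ is essential in $eQ$ as a right $R$-module (because $eQ$ is an injective summand of $Q_R$ and $R\stackrel{*}{\hookrightarrow}Q_R$, so $eQ=E(eQ\cap R)$): the right $R$-linear map $q\mapsto xq$ on $eQ$ then has essential kernel, so its image $x(eQ)$ is a singular submodule of the nonsingular module $Q_R$, forcing $x(eQ)=0$ and hence $xe=0$, i.e. $x\in Q(1-e)\cap R$.

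With this identity the transfer of CS is routine. Let $D$ be an arbitrary closed left ideal; write $D=Qf\cap R$ and put $e=1-f$, so that $C:=eQ\cap R$ is a closed right ideal with $\lan_R(C)=Q(1-e)\cap R=D$. Since $R$ is right CS, the closed right ideal $C$ is a direct summand, say $C=gR$ with $g^2=g\in R$; then $D=\lan_R(gR)=R(1-g)$ is a direct summand of ${}_RR$. As $D$ was arbitrary, $R$ is left CS, completing $(1)\Rightarrow(2)$. The main obstacle is precisely this middle step: one must pin down that the closed left ideals are exactly the contractions $Qf\cap R$ and verify the annihilator identity, both of which rest on the nonsingularity argument above and on having the single self-injective regular overring $Q$ supplied by Lemma \ref{lm3.2}; without Utumi on both sides this common $Q$ would not exist and the correspondence would break.

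For the final assertion, once $(1)$ and $(2)$ hold $R$ is a nonsingular Utumi ring, so Proposition \ref{p1} gives $\udim(R_R)=n$ if and only if $\udim({}_RR)=n$. Hence finite uniform dimension on one side forces it on the other, and Proposition \ref{p1} then yields that $R$ is right and left Goldie.
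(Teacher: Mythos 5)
Your proposal is correct and follows essentially the same route as the paper: reduce to a nonsingular Utumi ring via Lemmas \ref{lm3.3} and \ref{lm3.4}, obtain the common maximal quotient ring $Q$ from Lemma \ref{lm3.2}, use the Johnson lattice correspondence to pass between closed left ideals $Qf\cap R$ and closed right ideals $eQ\cap R$, transfer the CS property through the annihilator identity $\lan_R(eQ\cap R)=Q(1-e)\cap R$, and finish with Proposition \ref{p1}. The only (cosmetic) difference is that you prove the annihilator identity by a singular-submodule argument on the nonsingular module $Q_R$, whereas the paper computes it through the essential right ideal $K=\{k\in R\mid (1-e)k\in R\}$; these are the same nonsingularity argument in different form.
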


\begin{proof}
We assume that $R$ is a right nonsingular, right CS and left Utumi ring. By Lemma $\ref{lm3.3}$, $R$ is right Utumi, so it is Utumi. By Lemma $\ref{lm3.4}$, $R$ is left nonsingular, so  it is nonsingular. Since $R$ is a nonsingular, Utumi ring, the MRQR and the MLQR of $R$ coincide by Lemma $\ref{lm3.2}$, and denoted by $Q.$

Now, we prove that $R$ is left CS. For any closed left ideal $I$ of $R,$ by the lattice isomorphism \cite[Corollary 2.6]{john}, we have $I=J\cap R$ for some closed left ideal $J$ of $Q.$ Then, $J$ is a direct summand of $Q,$ writing $J=Qe$ for some idempotent $e\in Q.$ We easily see that $\ran_Q(e)=(1-e)Q$ is a closed right ideal of $Q,$ thus $(1-e)Q\cap R$ is a closed right ideal of $R.$ Since $R$ is right CS, we get $(1-e)Q\cap R=fR$ for some $f=f^2\in R.$ We set $K=\{k\in R|(1-e)k\in R\}.$ Then, $K$ is an essential right ideal of $R.$ We have $R(1-f)=\lan_R(fR)=\lan_R[(1-e)Q\cap R]=\lan_R[(1-e)K]=\{x\in R|x(1-e)K=0\}=\{x\in R|x(1-e)=0\}=\lan_Q(1-e)\cap R=Qe\cap R=I.$ Thus, $I$ is a direct summand of $R.$ This implies that $R$ is left CS.

The converse is right-left symmetric.

The last statement is referred to Proposition $\ref{p1}.$
\end{proof}

\begin{corollary}
A right nonsingular, right CS and left Utumi ring is directly finite.
\end{corollary}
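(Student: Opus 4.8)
The plan is to reduce the statement to the direct finiteness of the common maximal quotient ring. First I would invoke Theorem \ref{th3.5}: since $R$ is right nonsingular, right CS and left Utumi, it is simultaneously left nonsingular, left CS and right Utumi, hence a nonsingular, Utumi ring. Lemma \ref{lm3.2} then applies and produces a single ring $Q$ which is both the MRQR and the MLQR of $R$; by the remark following that lemma, $Q$ is von Neumann regular and both right and left self-injective, contains $R$ as an essential right and left submodule, and may be realized as the set of $q$ for which $\{r\in R : qr\in R\}$ is an essential right ideal of $R$.

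The second step is a transfer principle. Suppose $a,b\in R$ satisfy $ab=1$. Reading this identity inside $Q$, it suffices to prove that $Q$ itself is directly finite: then $ab=1$ forces $ba=1$ in $Q$, and since $ba\in R$ we recover $ba=1$ in $R$. In this way the whole question is moved from $R$ to $Q$, and the only remaining task is to show that a von Neumann regular ring which is both right and left self-injective is directly finite.

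For that final step I would argue with idempotents. Given $xy=1$ in $Q$, the element $e=yx$ is idempotent; left multiplication by $y$ gives a right $Q$-module isomorphism $Q_Q\cong eQ$, right multiplication by $x$ gives a left $Q$-module isomorphism $_QQ\cong Qe$, and $Q_Q=eQ\oplus(1-e)Q$, $_QQ=Qe\oplus Q(1-e)$; the goal is to force $1-e=0$. This is exactly where two-sided self-injectivity is indispensable: the right-module isomorphism $Q_Q\cong eQ$ can genuinely hold with $e\neq1$ for a ring that is only right self-injective (for instance $\End(V_D)$ with $V$ of infinite dimension is von Neumann regular, right self-injective and directly infinite), and such a ring is never left self-injective, equivalently its maximal right and left quotient rings do not coincide. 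Thus the left self-injectivity of $Q$, coming precisely from the left Utumi hypothesis through Lemma \ref{lm3.2}, is what excludes this behaviour, and I would appeal to the structure theory of self-injective regular rings to conclude that such a $Q$ admits no proper idempotent equivalent to $1$. I expect this last point to be the main obstacle: one must show that a left-invertible element of a two-sided self-injective regular ring is already invertible, which is the heart of the matter and is where the symmetry between the right and left self-injective structures of $Q$ must be exploited. (When either $R_R$ or $_RR$ has finite uniform dimension the argument is immediate from Proposition \ref{p1}, since then $\udim(eR)=\udim(R_R)$ and the complement $(1-e)R$ must vanish.)
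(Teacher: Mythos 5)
Your proposal is correct, but it takes a genuinely different route from the paper's. The paper's proof is a one-liner: by Theorem~\ref{th3.5} the ring is simultaneously right and left CS, and then it invokes the known fact that a ring which is both right and left CS is directly finite. You share the first step (Theorem~\ref{th3.5}), but then pass to the common maximal quotient ring $Q$ furnished by Lemma~\ref{lm3.2}, observe (as in the remark following that lemma) that $Q$ is von Neumann regular and both right and left self-injective, note that direct finiteness descends trivially from $Q$ to the unital subring $R$ (if $ab=1$ in $R$ then $ba=1$ in $Q$, and $ba\in R$), and reduce everything to the claim that a regular, two-sided self-injective ring is directly finite. That claim, which you present as ``the main obstacle,'' is in fact not an obstacle but a classical theorem of Utumi (right and left continuous regular rings are even unit-regular; see Goodearl's book on von Neumann regular rings), so your argument is complete once you cite it rather than gesture at it --- and in that respect it is no less rigorous than the paper's proof, which likewise invokes its CS fact without proof or reference. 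Comparing the two: the paper's argument is shorter and never leaves $R$, but rests on a nontrivial theorem from the extending-modules literature; yours exploits the quotient-ring machinery the paper has already set up and replaces that theorem by an older, well-documented one about self-injective regular rings, at the cost of an extra (but harmless) transfer step. Your closing parenthetical also correctly disposes of the finite uniform dimension case by the elementary count $\udim(eR)=\udim(R_R)$ forcing $(1-e)R=0$, via Proposition~\ref{p1}.
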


\begin{proof}
  It follows from Theorem $\ref{th3.5}$ and the fact that a right and left CS ring is directly finite.
\end{proof}

\begin{corollary} \label{c1}
If $M$ is a  nonsingular, retractable module, then the following statements are equivalent:

(1) $M$ is a co-Utumi, CS module;

(2) $S$ is a left Utumi, right CS ring;

(3) $S$ is a right Utumi, left CS ring.

In addition, if $M$ has finite uniform dimension, then $M_R, S_S$ and $_SS$ are Goldie modules, and $\udim(M_R)=\udim(S_S)=\udim(_SS).$
\end{corollary}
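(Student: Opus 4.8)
The plan is to convert the module-theoretic hypotheses on $M$ into ring-theoretic hypotheses on $S$, to apply Theorem~\ref{th3.5} to the ring $S$ for the right-left passage, and to read off the dimension statement from Proposition~\ref{p1}. Two consequences of the standing hypothesis will be used throughout: since $M$ is nonsingular and retractable, condition (I) holds by Lemma~\ref{lm3.9}(1), and $S$ is right nonsingular by \cite[Theorem 3.1]{khuri}. I would then prove the equivalences along the cycle $(1)\Rightarrow(2)\Rightarrow(3)\Rightarrow(1)$.

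For $(1)\Rightarrow(2)$, note first that $M$, being CS, is Utumi by Lemma~\ref{lm3.3}, and being nonsingular and CS is cononsingular by Lemma~\ref{lm3.4}; together with the assumed co-Utumi property, $M$ is then a nonsingular, cononsingular, Utumi and co-Utumi retractable module. Cononsingularity gives left nonsingularity of $S$ by \cite[Proposition 1]{khuri3}, so $S$ is two-sided nonsingular, and since $M$ is Utumi and co-Utumi, the module form of Lemma~\ref{lm3.2} makes $T$ the common MRQR and MLQR of $S$; its ring form then forces $S$ to be right and left Utumi. The remaining point is that $M$ CS implies $S$ right CS. I would obtain this from the correspondence $X\mapsto I_X$, $K\mapsto KM$: under (I) and (II) it restricts to an inclusion-preserving bijection between the closed submodules of $M_R$ and the closed right ideals of $S_S$ carrying direct summands to direct summands, so that the extending property of $M$ passes to $S_S$. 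Establishing (II) via Lemma~\ref{lm3.9}(2), i.e. checking $K\stackrel{*}{\hookrightarrow}I_{KM}$ for each right ideal $K$ of $S$, is what makes this step work.

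Next, $(2)\Rightarrow(3)$ is a direct application of Theorem~\ref{th3.5} to $R=S$: hypothesis $(1)$ of that theorem reads ``$S$ right nonsingular, right CS and left Utumi'', which is exactly the standing right nonsingularity of $S$ together with $(2)$, and its conclusion $(2)$ yields $S$ left nonsingular, left CS and right Utumi, that is $(3)$. For $(3)\Rightarrow(1)$ I would run the previous translation in reverse: a left CS ring is left Utumi (the left analogue of Lemma~\ref{lm3.3}), so with the assumed right Utumi property $S$ is two-sided Utumi; once $S$ is seen to be two-sided nonsingular, Lemma~\ref{lm3.2} returns $T$ as the common MRQR and MLQR, whence $M$ is Utumi and co-Utumi, and the same closed-submodule correspondence carries the extending property back from $S$ to $M$. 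Finally, in every case $M$ has been shown to be nonsingular, cononsingular, Utumi, co-Utumi, retractable and to possess (II), so Proposition~\ref{p1} applies and gives $\udim(M_R)=\udim(S_S)=\udim({}_SS)$ together with the three Goldie conclusions whenever $\udim(M_R)$ is finite.

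The hard part will be the honest verification of condition (II) and of the claim that the closed-submodule/closed-right-ideal correspondence sends summands to summands; this is the one step that is not a mere citation, and it is exactly where retractability and nonsingularity have to be combined with the extending hypothesis rather than merely quoted. A secondary subtlety is securing the two-sided nonsingularity of $S$ in the implication $(3)\Rightarrow(1)$: the standing hypotheses only provide right nonsingularity of $S$, so left nonsingularity must be recovered from the left CS and Utumi conditions (or, equivalently, from the cononsingularity of $M$) rather than assumed.
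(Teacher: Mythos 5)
Your proposal breaks down at its decisive step, the transfer of the CS property between $M$ and $S$. The paper's proof of $(1)\Leftrightarrow(2)$ gets this for free by citing \cite[Theorem 3.2]{khuri4}, which states outright that a nonsingular retractable module $M$ is CS if and only if $S$ is right CS --- no condition (II) needed. You instead route the transfer through the closed-submodule/closed-right-ideal correspondence built from (I) and (II), so your whole plan hinges on establishing (II), i.e. $K\stackrel{*}{\hookrightarrow}I_{KM}$ for every right ideal $K\hookrightarrow S_S$. You give no argument for this, and none is apparent from the hypotheses: (II) does not follow from nonsingularity plus retractability (Lemma~\ref{lm3.9}(1) yields only (I)), which is exactly why every result in the paper that uses (II) --- Lemma~\ref{lm3.13}, Theorem~\ref{th3.10}, Propositions~\ref{p1} and \ref{p2}, Corollaries~\ref{c3} and \ref{c4} --- carries it as a separate hypothesis. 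Nothing in the CS assumption produces, for $0\neq f$ with $f(M)\subseteq KM$, an element $s\in S$ with $0\neq fs\in K$; that kind of lifting is available for quasi-projective self-generators or nondegenerate modules, but not for general retractable ones. So $(1)\Rightarrow(2)$ and $(3)\Rightarrow(1)$ are genuinely incomplete, not merely terse: the step you defer as ``the hard part'' is the step your method cannot supply, while the paper sidesteps it entirely by citation.

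On the other pieces: your translation of the Utumi conditions is correct and is a legitimate alternative to the paper's route --- the paper quotes \cite[Lemma 4]{khuri3} for ``$S$ left Utumi $\Leftrightarrow$ $M$ co-Utumi,'' whereas you compose the module and ring forms of Lemma~\ref{lm3.2} after securing cononsingularity of $M$ (Lemma~\ref{lm3.4}) and left nonsingularity of $S$ (\cite[Proposition 1]{khuri3}); and your $(2)\Rightarrow(3)$ via Theorem~\ref{th3.5} is exactly the paper's argument. Two residual problems remain, though. In $(3)\Rightarrow(1)$ you need both left nonsingularity of $S$ and cononsingularity of $M$ before the module form of Lemma~\ref{lm3.2} even applies; you flag the first honestly but resolve neither (the paper's own appeal to Theorem~\ref{th3.5} for $(3)\Rightarrow(2)$ quietly relies on the same point, since hypothesis (2) of that theorem includes left nonsingularity). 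And for the final dimension statement you assert that $M$ ``has been shown'' to possess (II) --- it has not, so Proposition~\ref{p1} is invoked outside its stated hypotheses; the paper cites Proposition~\ref{p1} here too, but it does not claim (II) as established. The repair is simple: replace your correspondence argument by the citation of \cite[Theorem 3.2]{khuri4}, and keep your Lemma~\ref{lm3.2}-based Utumi translation.
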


\begin{proof} We observe that $S$ is right nonsingular by \cite[Theorem 3.1]{khuri}.

{\em (1) $\Leftrightarrow$ (2)} Since $M$ is nonsingular and retractable, by \cite[Theorem 3.2]{khuri4}, $M$ is CS if and only if $S$ if right CS.  Since $M$ is nonsingular and CS, $M$ is a Utumi and co-nonsingular module by Lemma $\ref{lm3.3}$ and Lemma $\ref{lm3.4}$, respectively.  Therefore,  $S$ is left Utumi if and only if $M$ is co-Utumi by \cite[Lemma 4]{khuri3}.

{\em (2) $\Leftrightarrow$ (3)} It follows from Theorem $\ref{th3.5}$.

See Proposition $\ref{p1}$ for the last statement.

\end{proof}

\begin{theorem} \label{th3.7}
The following statements hold for every nonsingular Utumi ring $R.$

(1) $R$ is right min CS if and only if $R$ is left max CS.

(2) $R$ is right max CS if and only if $R$ is left min CS.

(3) $R$ is  right max-min CS if and only if $R$ is  left max-min CS.

\end{theorem}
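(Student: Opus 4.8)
The plan is to route all three parts through the common maximal quotient ring $Q$ furnished by Lemma \ref{lm3.2}: since $R$ is nonsingular and Utumi, its MRQR and MLQR coincide in a von Neumann regular, right and left self-injective ring $Q=E(R_R)=E({}_RR)$, inside which every closed one-sided ideal is a direct summand cut out by an idempotent. Two lattice tools will drive the argument. First, the lattice isomorphism \cite[Corollary 2.6]{john} realizes $J\mapsto J\cap R$ as an order isomorphism from the closed right (resp.\ left) ideals of $Q$ onto those of $R$, so it carries minimal closed ideals to minimal closed ideals and maximal closed ideals to maximal closed ideals. Second, within $Q$ the passage $eQ\mapsto\lan_Q(eQ)=Q(1-e)$ is an anti-isomorphism between closed right ideals and closed left ideals, and an anti-isomorphism of these complemented lattices interchanges atoms and coatoms, that is, it swaps \emph{minimal} and \emph{maximal} closed ideals. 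Composing the two correspondences matches a maximal closed right ideal of $R$ with a minimal closed left ideal of $R$, and conversely.

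I will prove (2), that $R$ is right max CS if and only if $R$ is left min CS, and then deduce (1) and (3). The bridge is the annihilator computation used in the proof of Theorem \ref{th3.5}: using that $K=\{k\in R: ek\in R\}$ is an essential right ideal and that $R$ is nonsingular, one obtains $\lan_R(eQ\cap R)=Q(1-e)\cap R$ together with its left-right mirror $\ran_R(Q(1-e)\cap R)=eQ\cap R$. For ``left min CS $\Rightarrow$ right max CS'', take a maximal closed right ideal $X$ of $R$ with $\lan_R(X)\neq 0$; writing its closure in $Q$ as $eQ$, set $L:=Q(1-e)\cap R$. The displayed computation gives $\lan_R(X)=L$, so the hypothesis $\lan_R(X)\neq 0$ shows $L\neq 0$, and as the image of the atom $Q(1-e)$ under the lattice isomorphism, $L$ is a minimal closed left ideal; left min CS makes $L=Rg$ a direct summand, and $\ran_R(L)$ recovers $X$ as a direct summand. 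The reverse direction runs backwards: a minimal closed left ideal $L$, with closure $Qg$, yields a maximal closed right ideal $X:=(1-g)Q\cap R$ for which the mirror computation gives $\lan_R(X)=Qg\cap R=L\neq 0$, so the annihilator clause in the definition of max CS is automatically met, and right max CS delivers $X=hR$, whence $L=\lan_R(X)=R(1-h)$ is a direct summand.

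Statement (1), that $R$ is right min CS if and only if $R$ is left max CS, is the exact left-right mirror of (2), obtained by applying (2) to the opposite ring, the hypothesis ``nonsingular Utumi'' being two-sided symmetric. Statement (3) then follows by conjunction, since right max-min CS means right max CS and right min CS, which by (1) and (2) is equivalent to left min CS and left max CS, namely left max-min CS. The step I expect to be the main obstacle is the bookkeeping at the interface of the two isomorphisms: one must confirm that forming annihilators genuinely commutes with intersecting down to $R$ --- precisely the identity $\lan_R(eQ\cap R)=Q(1-e)\cap R$ --- and that this makes the nonzero-annihilator condition in the definition of max CS correspond to the nonvanishing of the associated minimal closed ideal. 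Once that matching is pinned down, minimality and maximality transfer cleanly through $Q$ and the direct-summand conclusions are immediate.
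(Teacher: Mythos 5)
Your proposal is correct and follows essentially the same route as the paper: pass to the common maximal quotient ring $Q$ from Lemma \ref{lm3.2}, use Johnson's lattice isomorphism $J\mapsto J\cap R$ together with the annihilator duality $eQ\leftrightarrow Q(1-e)$ inside $Q$ to swap minimal and maximal closed ideals, and use the essential-ideal computation $\lan_R(eQ\cap R)=Q(1-e)\cap R$ (and its mirror) to pull direct summands back down to $R$. The only differences are organizational --- you prove (2) directly and obtain (1) via the opposite ring, while the paper proves (1) directly and calls (2) dual, and you package the min/max interchange as an abstract lattice anti-isomorphism rather than the paper's element-wise idempotent computations --- neither of which changes the substance.
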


\begin{proof} Since $R$ is nonsingular Utumi, the MRQR and the MLQR of $R$ coincide by Lemma $\ref{lm3.2}$, and denoted by $Q.$

\emph{(1)} We assume that $R$ is right min CS. For any maximal closed left ideal $I$ of $R$ with $\ran_R(I)\neq 0,$ by the lattice isomorphism \cite[Corollary 2.6]{john}, we have $I=J\cap R$ for some closed left ideal $J$ of $Q.$ If $J$ is contained in some closed left ideal $K$ of $Q,$ then $K\cap R$ is a closed left ideal of $R$ and $I\subseteq K\cap R.$ Since $I$ is maximal closed, $I=K\cap R=J\cap R$ so $K=J.$ This shows that $J$ is a maximal closed left ideal of $Q.$ It is clear that $J$ is a direct summand of $Q,$ so $J=Qe$ for some idempotent $e\in Q.$ We easily see that $\ran_Q(Qe)=\ran_Q(e)=(1-e)Q$ is a closed right ideal of $Q,$ thus $(1-e)Q\cap R$ is a closed right ideal of $R.$ We will show that $(1-e)Q$ is minimal closed in $Q.$ Suppose that $H=tQ, t=t^2\in Q,$ is a closed right ideal of $Q$ such that $H\subseteq (1-e)Q.$ Then, $\lan_Q(H)=Q(1-t)\supseteq \lan_Q[(1-e)Q]=Qe.$ Since $Qe=J$ is maximal closed in $Q,$ $Qe=Q(1-t)$ and hence $(1-e)Q=\ran_Q(Qe)=\ran_Q[Q(1-t)]=tQ=H.$ This implies that $(1-e)Q$ is minimal closed in $Q$. Thus $(1-e)Q\cap R$ is a minimal closed right ideal of $R.$ Since $R$ is right min CS, $(1-e)Q\cap R=fR$ for some idempotent $f\in R.$ We set $F=\{k\in R|(1-e)k\in R\}.$ Then, $F$ is an essential right ideal of $R.$ We have $R(1-f)=\lan_R(fR)=\lan_R[(1-e)Q\cap R]=\lan_R[(1-e)F]=\{x\in R|x(1-e)F=0\}=\{x\in R|x(1-e)=0\}=\lan_Q(1-e)\cap R=Qe\cap R=I.$ Thus, $I$ is a direct summand of $R.$ This shows that $R$ is left max CS.

Conversely, let $R$ be a left max CS ring. For any properly minimal closed right ideal $I$ of $R,$ by the lattice isomorphism \cite[Corollary 2.6]{john}, we have $I=J\cap R$ for some closed right ideal $J$ of $Q.$ If $J$ contains a closed right ideal $K$ of $Q,$ then $K\cap R$ is a closed right ideal of $R$ and $(K\cap R)\subseteq (J\cap R)=I.$ Since $I$ is minimal closed, $I=K\cap R=J\cap R$ so $K=J.$ This shows that $J$ is a minimal closed right ideal of $Q.$ We write $J=eQ$ for some idempotent $0\neq e\in Q.$ We observe that $\lan_Q(eQ)=\lan_Q(e)=Q(1-e)$ is a closed left ideal of $Q,$ thus $Q(1-e)\cap R$ is a closed left ideal of $R.$ We will prove that $Q(1-e)$ is maximal closed in $Q.$ Suppose that $H=Qt, t=t^2\in Q,$ is a closed left ideal of $Q$ such that $H\supseteq Q(1-e).$ Then, $\ran_Q(H)=(1-t)Q\subseteq \ran_Q[Q(1-e)]=eQ.$ Since $eQ=J$ is minimal closed in $Q,$ $eQ=(1-t)Q$ and hence $Q(1-e)=\lan_Q(eQ)=\lan_Q[(1-t)Q]=Qt=H.$ This implies that $Q(1-e)$ is maximal closed in $Q,$ thus $Q(1-e)\cap R$ is a maximal closed left ideal of $R.$

Because of $e\neq0,$ we have $0\neq eQ\cap R\subset \ran_R[Q(1-e)\cap R]$. Since $R$ is left max CS, $Q(1-e)\cap R=Rf$ for some idempotent $f\in R.$ We set $F=\{k\in R|k(1-e)\in R\}.$ Then, $F$ is an essential left ideal of $R.$ We have $(1-f)R=\ran_R(Rf)=\ran_R[Q(1-e)\cap R]=\ran_R[F(1-e)]=\{x\in R|F(1-e)x=0\}=\{x\in R|(1-e)x=0\}=\ran_Q(1-e)\cap R=eQ\cap R=I.$ Thus, $I$ is a direct summand of $R.$ This shows that $R$ is right min CS.

\emph{(2)} It is dual to the proof of \emph{(1)}.

\emph{(3)} It is induced from \emph{(1)}  and \emph{(2)}.

\end{proof}

By \cite[Theorem 3.2]{khuri4}, a nonsingular retractable module is CS if and only if its endomorphism ring is right CS. We wish to find an analogue for the max-min CS property. With the aid of (I) and (II), we will transfer the max CS, min CS and max-min CS properties of a module to its endomorphism in the next theorem.

\begin{theorem} \label{th3.10}
Let $M$ be a nonsingular and retractable module which possesses the condition (II). Then, the following statements hold.

(1) $M$ is min CS if and only if $S$ is right min CS.

(2) $M$ is max CS if and only if $S$ is right max CS.

(3) $M$ is  max-min CS if and only if $S$ is  right max-min CS.

\end{theorem}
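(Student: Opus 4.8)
The plan is to prove Theorem~\ref{th3.10} by transporting the max CS, min CS, and max-min CS properties between $M_R$ and $S_S$ using the dictionary provided by conditions (I) and (II), in close parallel to the way \cite[Theorem 3.2]{khuri4} handles the full CS property. Since $M$ is nonsingular and retractable, Lemma~\ref{lm3.9}(1) gives (I) automatically, and (II) is assumed; thus both the correspondence $X\mapsto I_X$ on submodules of $M$ and $K\mapsto KM$ on right ideals of $S$ are available, together with the essentiality equivalences in (I) and (II) and the sandwich estimates $K\stackrel{*}{\hookrightarrow}I_{KM}$ and $I_X(M)\stackrel{*}{\hookrightarrow}X$ from Lemma~\ref{lm3.9}(2),(3). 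The first task is to establish that these two maps restrict to a bijection between closed submodules of $M_R$ and closed right ideals of $S_S$, and moreover that this bijection preserves both the ordering and the direct-summand property. Given a closed $X\hookrightarrow M$, I would show $I_X$ is closed in $S_S$: if $I_X\stackrel{*}{\hookrightarrow}L$ then by (I) applied to $I_X$ and $L=I_{LM}$-type arguments (using $X\mapsto I_X$ is order-preserving and reflects essentiality), closedness of $X$ forces $I_X=L$; symmetrically, $K$ closed in $S_S$ yields $KM$ closed in $M_R$ via (II) and the sandwich $K\stackrel{*}{\hookrightarrow}I_{KM}$. I would also record that $X$ is a direct summand of $M$ iff $I_X$ is a direct summand of $S_S$, which is the engine driving all three equivalences.

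Next I would translate the size conditions distinguishing maximal-closed and minimal-closed submodules. For part (1), a minimal (nonzero) closed submodule $U$ of $M$ is uniform, and under the correspondence $U\mapsto I_U$ I expect $I_U$ to be a minimal closed right ideal of $S_S$, the point being that the lattice of closed submodules is isomorphic to the lattice of closed right ideals via the essentiality-preserving maps, so minimal elements map to minimal elements. Hence ``$M$ min CS'' (every minimal closed submodule is a summand) corresponds exactly to ``$S$ right min CS.'' For part (2), the subtlety is the defining clause of max CS, namely that one only considers maximal closed submodules with nonzero left annihilator (in $S$ for a module, in the endomorphism ring of $S$ for the ring $S_S$); I would need to check that the annihilator condition is preserved under $X\mapsto I_X$. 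Concretely, for a maximal closed $X\hookrightarrow M$ one has $\lan_S(X)\neq0$, and I would show this is equivalent to the corresponding left-annihilator condition on $I_X$ inside $\End(S_S)$, using that $\lan_S(X)\neq0$ means some nonzero $f\in S$ kills $X$, and retractability plus nonsingularity let me relate such $f$ to left-annihilating maps of $I_X$. With the annihilator condition matched and the maximal-closed-to-maximal-closed correspondence in hand, ``$M$ max CS'' transfers to ``$S$ right max CS.'' Part (3) is then immediate, since max-min CS is by definition the conjunction of max CS and min CS, so it follows from (1) and (2).

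The main obstacle I anticipate is verifying that the correspondence $X\mapsto I_X$ is genuinely an \emph{order-isomorphism} of the closed-submodule lattices that additionally respects minimality, maximality, \emph{and} the nonzero-left-annihilator side condition in the definition of max CS. The closedness transfer and the summand transfer are essentially the content already exploited in \cite[Theorem 3.2]{khuri4} and should go through with the Lemma~\ref{lm3.9} sandwiches, but the annihilator bookkeeping in part (2) is new: I must be careful that ``maximal closed with $\lan\neq0$'' on the module side corresponds precisely to ``maximal closed with $\lan\neq0$'' on the ring side, rather than to some strictly larger or smaller family, since a mismatch there would break the equivalence in one direction. I would handle this by first proving the clean lattice isomorphism on \emph{all} closed submodules/ideals (so that maximality and minimality transfer formally), and only then layering on the annihilator condition as a separately verified invariant of the correspondence; isolating it this way keeps the genuinely delicate step small and localized.
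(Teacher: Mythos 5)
Your overall strategy is the same as the paper's: both proofs run on the (I)/(II) dictionary together with the Lemma~\ref{lm3.9} sandwiches $K\stackrel{*}{\hookrightarrow}I_{KM}$ and $I_X(M)\stackrel{*}{\hookrightarrow}X$, transfer direct summands via $I_{e(M)}=eS$, and treat the annihilator side condition of max CS by showing it is invariant under the correspondence (the paper does precisely your proposed step, proving in effect $\lan_S(K)\neq 0\Leftrightarrow\lan_S(X)\neq 0$ for $K=I_X$, using Proposition~\ref{p0} and nonsingularity). The difference is organizational: you front-load a single lattice isomorphism between closed submodules and closed right ideals and then read off (1)--(3) formally, whereas the paper argues each case inline (for min CS it transfers \emph{uniformity} rather than minimality, using retractability to show $K$ uniform $\Leftrightarrow$ $KM$ uniform, and then invokes the uniform-extending characterization).

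There is, however, one genuine flaw in your foundational step: the claim that ``$K$ closed in $S_S$ yields $KM$ closed in $M_R$'' is false in general, and consequently $X\mapsto I_X$ and $K\mapsto KM$ are \emph{not} mutually inverse on closed objects. Lemma~\ref{lm3.9}(3) gives only $I_X(M)\stackrel{*}{\hookrightarrow}X$, not equality; so whenever $X$ is closed with $I_X(M)\subsetneq X$, the right ideal $K=I_X$ is closed (that direction of your argument is fine), yet $KM=I_X(M)$ is a \emph{proper essential} submodule of $X$ and hence not closed. The repair is exactly the device the paper leans on repeatedly: since $M$ is nonsingular, every submodule has a unique closure, so the inverse of $X\mapsto I_X$ must be taken to be $K\mapsto \operatorname{cl}(KM)$, the closure of $KM$; one then checks $I_{\operatorname{cl}(KM)}=K$ from $K\stackrel{*}{\hookrightarrow}I_{KM}$, condition (I), and closedness of $K$ (this is what the paper is doing in phrases like ``But $I_X(M)$ has one closure only. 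Therefore, we must have $X=e(M)$''). With that correction your order-isomorphism, the summand transfer, and the minimality/maximality/annihilator bookkeeping all go through, and your proof becomes a clean systematization of the paper's argument; without it, the bijection you build everything on does not exist as stated.
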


\begin{proof}
It is clear that {\em (3)} follows from {\em (1)} and {\em (2)}. Note that since $M$ is a nonsingular module, every submodule $X$ has a unique closure (i.e. there is a unique closed submodule of $M$ that essentially contains $X$).

{\em (1)} Let $M$ be a min CS module. For a minimal closed (or uniform closed) right ideal $K$ of $S,$ we have  $K\stackrel{*}{\hookrightarrow}I_{KM}$ by Lemma $\ref{lm3.9}$ so $K=I_{KM}$. For nonzero submodules $U, V$ of $KM,$ since $M$ is retractable, $I_U$ and $I_V$ are nonzero. It is clear that $I_U$ and $I_V$ are contained in $I_{KM}=K,$ thus $I_U \cap I_V\neq 0.$ Then, there exists $0\neq s \in I_U \cap I_V,$ whence $0\neq s(M)\subset U\cap V.$ Therefore, $KM$ is uniform. Since $M$ is min CS, $KM$ is essential in a direct summand of $M,$ namely $X.$ We have $X=e(M)$ for some $e=e^2\in S$ and by the condition (I), $K=I_{KM}\stackrel{*}{\hookrightarrow}I_{e(M)}=eS.$ Closeness of $K$ implies that $K=eS$, and hence $K$ is a direct summand of $S.$ Consequently, $S$ is right min CS.

Conversely, let $S$ be a right min CS ring. For a uniform closed submodule $X$ of $M,$ $I_X$ is a right ideal of $S.$ If arbitrary nonzero right ideals $K, L$ are contained in $I_X,$ then $KM, LM$ are nonzero submodules contained in $X.$ Since $X$ is uniform, $KM\cap LM=Y\neq 0$. By retractability of $M,$ there exists $0\neq s\in S$ such that $s(M)\subset Y.$ Therefore, we get $s\in I_{KM}\cap I_{LM}.$ On the other hand, $K \stackrel{*}{\hookrightarrow}I_{KM}$ and $L\stackrel{*}{\hookrightarrow}I_{LM}$ follows from Lemma $\ref{lm3.9}.$ Thus, there exists $f\in S$ such that $0\neq sf\in K,$ and $sfS\cap L\neq 0.$ This implies that $K\cap L\neq 0$ so $I_X$ is uniform. Since $S$ is right min CS, $I_X$ is essential in a direct summand $J=eS$ for some $e=e^2\in S.$ Then, by the condition (II), $I_X(M)$ is essential in $eS(M)=e(M)$, a direct summand of $M.$ By Lemma $\ref{lm3.9},$ $I_X(M)$ is essential in $X.$ But $I_X(M)$ has one closure only. Therefore, we must have $X=e(M).$ This shows that $M$ is min CS.

{\em (2)} Let $M$ be a max CS module. For a maximal closed right ideal $K$ of $S$ with $\lan_S(K)\neq 0,$ we have $K=I_{KM}$ as arguing in {\em (1)}. It is induced from the condition (II) that $KM$ is not essential in $M$, since $K$ is not essential in $S$. Thus, there exists a maximal closed submodule $X\hookrightarrow M$ containing $KM$ and $X\neq M$. We have $K=I_{KM}\subset I_X$ so $K=I_X$ by maximality of $K$. This implies that $KM=I_X(M)\stackrel{*}{\hookrightarrow}X$.
Because  $\lan_S(I_X)=\lan_S(K)\neq 0$, there is some $f\in S$ so that $f(KM)=0.$ By Proposition $\ref{p0}$, we also have $f(X)=0$ so $\lan_S(X)\neq 0$. Since $M$ is max CS, $X$ is a direct summand of $M,$ writing $X=e(M)$ for some $e=e^2\in S.$ Then, we have $K\subset I_{e(M)}=eS$. Since $K$ is maximal closed, $K=eS$ holds true. Thus, $S$ is right max CS.

Conversely, let $S$ be a right max CS ring. For a maximal closed submodule $X$ of $M$ with nonzero left annihilator in $S,$ we have $I_X(M)\stackrel{*}{\hookrightarrow}X$ and $0\neq\lan_S(X)\subset\lan_S(I_X)$. By the condition (I), $I_X$ is not essential in $S$, since $X$ is not essential in $M$. Thus, there exists a maximal closed right ideal $K\hookrightarrow S$ containing $I_X$ and $K\neq S.$ We observe that $I_X(M)\subset KM.$ On the other hand, $I_X(M)$ has a unique maximal essential extension, so $KM\subset X$ because of maximality of $X$. This shows that $K=I_X$ and hence $I_X=eS$ for some $e=e^2\in S,$ since $S$ is right max CS.  Therefore, we get $I_X(M)\hookrightarrow e(M),$ whence $X=eM,$ a direct summand of $M.$ This proves that $M$ is max CS.

\end{proof}

By Theorem $\ref{th3.7}$ and Theorem $\ref{th3.10}$, we do have.

\begin{corollary} \label{c3}

Let $M$ be a  retractable, nonsingular and co-nonsingular, Utumi and co-Utumi  $R-$module which possesses the condition (II). Then, the following statements hold.

(1) $M$ is min CS if and only if $S$ is right min CS if and only if $S$ is left max CS.

(2) $M$ is max CS if and only if $S$ is right max CS if and only if $S$ is left min CS.

(3) $M$ is  max-min CS if and only if $S$ is  right max-min CS if and only if $S$ is left max-min CS.

\end{corollary}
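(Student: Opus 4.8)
The plan is to combine Theorem \ref{th3.10}, which ties the right-sided extending properties of $M$ to those of its endomorphism ring $S$, with Theorem \ref{th3.7}, which provides the right-left symmetry of these properties on a nonsingular Utumi ring. The only genuine work is to check that the hypotheses of both theorems are satisfied, after which the three equivalences close up by concatenation.

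First I would apply Theorem \ref{th3.10}. Its hypotheses ask that $M$ be nonsingular, retractable, and possess the condition (II), all of which are assumed here. This at once yields, in each of the three cases, the first link: $M$ is min CS (resp. max CS, max-min CS) if and only if $S$ is right min CS (resp. right max CS, right max-min CS).

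Next I would verify that $S$ is a nonsingular Utumi ring, so that Theorem \ref{th3.7} applies to $S$. For nonsingularity, since $M$ is nonsingular, $S$ is right nonsingular by \cite[Theorem 3.1]{khuri}; since $M$ is co-nonsingular, $S$ is left nonsingular by \cite[Proposition 1]{khuri3}, hence $S$ is nonsingular. For the Utumi condition I would invoke Lemma \ref{lm3.2}: as $M$ is a retractable, nonsingular, cononsingular, Utumi and co-Utumi module, the ring $T=\End_R(E(M))$ is simultaneously the MRQR and the MLQR of $S$. Thus the MRQR and the MLQR of the nonsingular ring $S$ coincide, and the ``in particular'' clause of Lemma \ref{lm3.2}, read for $S$ in place of $R$, forces $S$ to be right and left Utumi. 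This is precisely the verification already performed inside the proof of Proposition \ref{p1}, so no new difficulty arises.

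Finally, with $S$ confirmed to be a nonsingular Utumi ring, Theorem \ref{th3.7} supplies the second link in each case: $S$ is right min CS if and only if $S$ is left max CS; $S$ is right max CS if and only if $S$ is left min CS; and $S$ is right max-min CS if and only if $S$ is left max-min CS. Chaining these with the equivalences from Theorem \ref{th3.10} establishes all three statements. I anticipate no real obstacle, since the substantive content resides entirely in the two cited theorems; the only point demanding care is confirming that $S$ inherits the nonsingular Utumi property, which follows from the coincidence of the MRQR and the MLQR of $S$ (both equal to $T$) via Lemma \ref{lm3.2}.
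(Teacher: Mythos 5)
Your proposal is correct and follows exactly the route the paper takes: the corollary is obtained by chaining Theorem \ref{th3.10} (linking the extending properties of $M$ to the right-sided ones of $S$) with Theorem \ref{th3.7} applied to $S$. Your careful verification that $S$ is a nonsingular Utumi ring (via \cite[Theorem 3.1]{khuri}, \cite[Proposition 1]{khuri3}, and Lemma \ref{lm3.2}) is exactly the check the paper performs inside the proof of Proposition \ref{p1} and leaves implicit here, so nothing is missing.
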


Motivated by \cite[Theorem 3]{CS1}, we study the symmetry of the CS property on one sided-ideals in the following theorem.

\begin{theorem} \label{th3.12}
Let $R$ be a nonsingular Utumi ring. Then, the following conditions are equivalent for every $e=e^2\in R.$

(1) $eR_R$ is CS with finite uniform dimension;

(2) $_RRe$ is CS with finite uniform dimension;

(3) $eRe$ is right  CS with finite right uniform dimension;

(4) $eRe$ is left CS with finite left uniform dimension.

In this case, $eR_R$ and $_RRe$ are Goldie modules, $eRe$ is a right and left Goldie ring, and $\udim(eR_R)=\udim(_RRe)=\udim(eRe_{eRe})=\udim(_{eRe}eRe)$.

\end{theorem}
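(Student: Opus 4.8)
The plan is to prove the cyclic chain of equivalences $(1)\Leftrightarrow(2)\Leftrightarrow(3)\Leftrightarrow(4)$ by exploiting the corner ring $eRe$ together with the Morita-type relationship between $eR_R$ and the right $eRe$-module structure. First I would record the standing consequences of $R$ being nonsingular Utumi: by Proposition~$\ref{p1}$ finiteness of right uniform dimension is equivalent to finiteness of left uniform dimension and forces $R$ to be right and left Goldie, while by Lemma~$\ref{lm3.2}$ the MRQR and MLQR of $R$ coincide in a common ring $Q$. The key module-theoretic observation to set up is that for the idempotent $e$, the module $eR_R$ is a nonsingular retractable $R$-module whose endomorphism ring is $\End(eR_R)\cong eRe$, and moreover $eR_R$ inherits enough of the ambient structure (nonsingularity, cononsingularity, Utumi and co-Utumi) from $R$ so that Corollary~$\ref{c3}$ and Theorem~$\ref{th3.10}$ become applicable with $M=eR_R$ and $S=eRe$.

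With that dictionary in place, the central equivalences fall out by assembling earlier results. The implication $(1)\Leftrightarrow(3)$ comes from applying Theorem~$\ref{th3.10}$ (or the CS half via \cite[Theorem 3.2]{khuri4}) to $M=eR_R$ with $S=eRe$: the module $eR_R$ is CS exactly when its endomorphism ring $eRe$ is right CS, and Lemma~$\ref{lm3.13}$ transfers finiteness of uniform dimension between $eR_R$ and $(eRe)_{eRe}$. The equivalence $(3)\Leftrightarrow(4)$ is then the statement that $eRe$ is right nonsingular, right CS, left Utumi iff it is left nonsingular, left CS, right Utumi, which is precisely Theorem~$\ref{th3.5}$ applied to the ring $eRe$; to invoke it I must first verify that $eRe$ is again a nonsingular Utumi ring. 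This verification — that the corner $eRe$ of a nonsingular Utumi ring $R$ is itself nonsingular and Utumi — is the step I expect to require the most care, since it is where the hypothesis on $R$ must be shown to localize to the corner; I would obtain it from the co-nonsingular/co-Utumi behaviour of $eR_R$ combined with \cite[Theorem 3.1]{khuri} and the characterizations in \cite{khuri3} used throughout Corollary~$\ref{c1}$. Finally $(2)\Leftrightarrow(4)$ is the left-handed mirror of $(1)\Leftrightarrow(3)$, obtained by working with the left module $_RRe$, whose endomorphism ring is again $eRe$ but acting on the opposite side.

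For the dimension equalities in the concluding sentence I would chain the isomorphisms already produced: Lemma~$\ref{lm3.13}$ gives $\udim(eR_R)=\udim((eRe)_{eRe})$, its left analogue gives $\udim(_RRe)=\udim(_{eRe}eRe)$, and Proposition~$\ref{p1}$ applied to the nonsingular Utumi ring $eRe$ gives $\udim((eRe)_{eRe})=\udim(_{eRe}eRe)$, so all four quantities coincide. The Goldie conclusions for $eR_R$, $_RRe$ and $eRe$ then follow directly from Proposition~$\ref{p1}$ together with the fact, recorded there, that a nonsingular module (resp. ring) of finite uniform dimension satisfies the ACC on annihilators. The main obstacle, as noted, is establishing that passage to the corner $eRe$ preserves the nonsingular Utumi hypotheses so that Theorem~$\ref{th3.5}$ and Proposition~$\ref{p1}$ may legitimately be reapplied to $eRe$; once that is secured, the remaining implications are bookkeeping that cites Theorem~$\ref{th3.10}$, Lemma~$\ref{lm3.13}$ and the left-right symmetry already proved.
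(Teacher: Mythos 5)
Your reduction has a genuine gap at its foundation: every tool you invoke --- Theorem \ref{th3.10}, Lemma \ref{lm3.13}, Corollary \ref{c3}, \cite[Theorem 3.2]{khuri4}, \cite[Theorem 3.1]{khuri} and the characterizations from \cite{khuri3} --- requires the module it is applied to to be \emph{retractable} (and in most cases to satisfy condition (II)), and the hypothesis that $R$ is a nonsingular Utumi ring does not make $eR_R$ retractable. Concretely, let $R=\left(\begin{smallmatrix} F & F \\ 0 & F\end{smallmatrix}\right)$ be the upper triangular matrix ring over a field $F$ (the paper's own example of a max-min CS ring). Its maximal right and left quotient rings both equal $M_2(F)$, so $R$ is nonsingular Utumi by Lemma \ref{lm3.2}. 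Take $e=E_{11}$, so that $eR=FE_{11}\oplus FE_{12}$ and $X=FE_{12}$ is a nonzero submodule of $eR$. Any $\psi\in\Hom_R(eR,X)$ satisfies $\psi(E_{11})=\psi(E_{11}E_{11})=\psi(E_{11})E_{11}\in XE_{11}=0$, hence $\psi=0$; thus $eR_R$ is not retractable. Note that this $R$ and $e$ satisfy all four conditions (1)--(4) of the theorem ($eR$ is uniform), so the failure is not an artifact of bad hypotheses: the dictionary between $eR_R$ and $eRe$ that you set up is unavailable precisely in situations the theorem covers, and with it collapse your proof of $(1)\Leftrightarrow(3)$, the dimension equalities (which rest on Lemma \ref{lm3.13}), and the step you yourself flag as critical, namely that $eRe$ inherits the nonsingular Utumi property (you propose to extract this from Khuri's module-theoretic results, which again presuppose retractability).

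The paper avoids endomorphism-ring machinery for $eR$ altogether. It works inside the common maximal right/left quotient ring $Q$ of $R$ supplied by Lemma \ref{lm3.2}: if $eR_R$ is CS of finite uniform dimension, then $eQ=E(eR_R)$ is a semisimple artinian right ideal of the regular self-injective ring $Q$, whence $eQe\cong\End(eQ)$ is semisimple artinian and is the maximal right (and left) quotient ring of the corner $eRe$; closed one-sided ideals are then transported between $eR$, $eRe$ and $eQ$, $eQe$ by the lattice isomorphism of \cite[Corollary 2.6]{john} together with explicit idempotent computations, and $(3)\Leftrightarrow(4)$ reduces to the argument of Theorem \ref{th3.5} with $eQe$ playing the role of $Q$. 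If you want to keep the spirit of your proof, the corner facts (that $eQe$ is the two-sided maximal quotient ring of $eRe$, hence that $eRe$ is nonsingular Utumi under the stated hypotheses) must be established by quotient-ring arguments of this kind, not deduced from the retractable-module correspondence.
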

\begin{proof}
Since $R$ is nonsingular Utumi, the MRQR and MLQR of $R$ coincide by Lemma $\ref{lm3.2}$, denoted by $Q$.

\emph{(1)$\Leftrightarrow$(3)} Let $eR_R$ be a CS module with finite uniform dimension. Then, $eQ$, the injective hall of $eR,$ is a semisimple artinian right ideal of $Q.$ Furthermore, because $\End(eQ)\cong eQe$, we see that $eQe$ is a semisimle artinian ring which is the MRQR of $eRe$. Thus, $eRe$ has  finite right uniform dimension. In order to show that $eRe$ is right CS, it is sufficient to prove that every uniform closed right ideal $V$ of $eRe$ is a direct summand of $eRe.$ Clearly, $V=f(eQe)\cap eRe$ for some $f=f^2\in eQe.$ We observe that $f=ef=fe$ and $fQ\hookrightarrow eQ.$ Thus, $fQ\cap R$ is closed in $R$ and contained in $eR.$ Therefore, $fQ\cap R$ is closed in $eR$. Since $eR_R$ is a CS module, $fQ\cap R$ is a direct summand of $eR$ so of $R$. This means $fQ\cap R=gR$ for some idempotent $g\in R.$ We have $ge=ege=(ege)^2.$ Hence, $ege(eRe)=geRe$ is a direct summand of $eRe$ which is also contained in $V.$ Since $V$ is minimal closed, $V=geRe$ is a direct summand of $eRe.$ This implies that $eRe$ is a right CS ring.

Conversely, let $eRe$ be a right CS ring with finite right uniform dimension. Then, $eQe$ is a semisimple artinian ring and $eQ_R$, the injective hall of $eR_R$, is a semisimple artinian and noetherian module. Thus, $eR_R$ has finite uniform dimension. Let $V$ be a minimal closed submodule of $eR.$ We have $V=fQ\cap R$, where $f=f^2\in Q$ and $fQ\hookrightarrow eQ.$ We observe that $fQ$ of a simple component of $eQ.$ Thus, $fQe$ is a simple component of $eQe$. Therefore, $fQe\cap eRe$ is a minimal closed right ideal of $eRe,$ hence $fQe\cap eRe=g(eRe)$, where $g=g^2\in eRe.$ We see that $gR$ is minimal closed. Because of $Ve\overset{*}{\hookrightarrow}fQ, Ve\cap gRe\neq 0$ and $V\cap gR\neq 0, V$ and $gR$ are both the unique closure of $V\cap gR$, whence $V=gR$. This shows that $V$ is a direct summand of $eR$ and hence $eR_R$ is CS.

\emph{(2)$\Leftrightarrow$(4)} We argue similarly to \emph{(1)$\Leftrightarrow$(3)}.

\emph{(3)$\Leftrightarrow$(4)} Let $eRe$ be a right  CS ring with finite right uniform dimension. Then, $eQe$, the MRQR of $eRe,$ is semisimple artinian and is also the MLQR of $eRe.$ Therefore, $eRe$ has finite left uniform dimension and is left CS (see proof of Theorem \ref{th3.5}). The converse is symmetric.

The last statement is referred to Proposition $\ref{p1}.$

\end{proof}

\begin{proposition} \label{p2}

Let $M$ be a nonsingular retractable $R-$module which possesses (II). Then, the following statements hold for any $e=e^2\in S.$

(1) $e(M)$ is CS if and only if $eS_S$ is CS.

(2) $e(M)$ is min CS if and only if $eS_S$ is min CS.

(3) $e(M)$ is max CS if and only if $eS_S$ is max CS.

(4) $e(M)$ is max-min CS if and only if $eS_S$ is max-min CS.
\end{proposition}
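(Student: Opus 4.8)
The plan is to reduce Proposition \ref{p2} to the module-versus-endomorphism-ring transfer theorems already proved, by the standard device of restricting everything to the idempotent-cut object $e(M)$ and its endomorphism ring $eSe$. The key structural fact I would exploit is the natural isomorphism $\End(e(M))\cong eSe$: an endomorphism of $e(M)$ is exactly the action of an element of $eSe$, so the endomorphism ring of the "sub-object" $e(M)$ is the "corner ring" $eSe$. Thus each claim of the proposition is really an instance of the corresponding claim in Theorem \ref{th3.10} (for min/max/max-min CS) or of \cite[Theorem 3.2]{khuri4} (for plain CS), but applied to the module $N:=e(M)$ over $R$ with endomorphism ring $eSe$, rather than to $M$ with $S$.

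To make that reduction legitimate I must verify that $N=e(M)$ inherits the hypotheses needed by those theorems. First, $N$ is a submodule of the nonsingular module $M$, hence $N$ is nonsingular. Second, $N$ is a direct summand of $M$, and retractability passes to direct summands: for $0\neq X\hookrightarrow N\hookrightarrow M$ retractability of $M$ gives $0\neq g\in\Hom(M,X)$, and then $ge\in\Hom(N,X)$ is nonzero because $X\subseteq e(M)$ forces $g$ to be detected on $e(M)$; so $N$ is retractable. Third, and this is where I expect the real work to sit, I must show that $N$ possesses condition (II) relative to its endomorphism ring $eSe$. Here I would lean on Lemma \ref{lm3.9}: since $N$ is nonsingular and retractable it automatically satisfies condition (I), and by part (2) of that lemma, (II) for $N$ is equivalent to the single inclusion $K\stackrel{*}{\hookrightarrow}I_{KN}$ for every right ideal $K$ of $eSe$. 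I would derive this from the corresponding essentiality for $M$ by the correspondence between right ideals $K\hookrightarrow eSe_{eSe}$ and the right ideals $Ke\hookrightarrow S$ (or $KS$), intersecting back into $eSe$, and by the identity $I_{KN}^{eSe}=I_{KM}^{S}\cap eSe$ together with $KN=KM$ for such $K$.

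Once these three inheritance facts are in hand, the four equivalences fall out in parallel. For each of (1)--(4) I would write: apply the relevant transfer theorem to the module $N=e(M)$ over $R$, whose endomorphism ring is $eSe$, to conclude that $N$ has the given CS-type property if and only if $eSe$ is right (min/max/max-min) CS as a ring, i.e. if and only if $(eSe)_{eSe}$ has that property. It then remains to bridge from "$(eSe)_{eSe}$ is (min/max/max-min) CS" to "$eS_S$ is (min/max/max-min) CS," which is exactly the statement that the extending-type properties of the corner ring $eSe$ read off from the summand $eS$ of $S_S$. I would establish this last bridge through the closed-submodule lattice correspondence between right ideals of $eSe$ and closed submodules of $eS_S$ contained in $eS$, noting that a closed submodule of $eS_S$ is a direct summand of $eS$ precisely when the matching right ideal of $eSe$ is a direct summand, and that uniformity/minimality/maximality of closed submodules are preserved under this correspondence.

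The main obstacle will be the verification of condition (II) for the summand $N=e(M)$, because (II) is the genuinely nontrivial hypothesis and it does not transfer to summands by a one-line argument the way nonsingularity and retractability do; I expect to spend most of the proof reducing (II) for $N$ to the criterion $K\stackrel{*}{\hookrightarrow}I_{KN}$ of Lemma \ref{lm3.9}(2) and then bootstrapping that single essentiality from the assumed condition (II) on $M$ via the identities $KN=KM$ and $I_{KN}=I_{KM}\cap eSe$. A secondary, more routine obstacle is keeping the correspondence between closed submodules of $eS_S$ and right ideals of $eSe$ compatible with the notions of minimal closed and maximal closed (the latter with the left-annihilator caveat built into the definition of max CS), so that all four equivalences are handled uniformly rather than requiring separate ad hoc arguments.
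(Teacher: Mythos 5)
You take a genuinely different route from the paper, and it contains a fatal gap. The paper never passes to the corner ring $eSe$: it reruns the argument of Theorem \ref{th3.10} after the single observation that, under $K\mapsto KM$ and $Y\mapsto I_Y$, closed right ideals of $S$ contained in $eS$ correspond to closed submodules of $M$ contained in $e(M)$; all hypotheses (retractability, (I), (II)) thus stay attached to $M$ and $S$, where they are assumed. Your plan instead requires the summand $N=e(M)$ to inherit those hypotheses, so that Theorem \ref{th3.10} can be applied to $N$ with endomorphism ring $eSe$, followed by a second transfer from $(eSe)_{eSe}$ to $eS_S$.

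The decisive gap is your inheritance of retractability, which is both incorrectly argued and false. The sentence ``$ge\in\Hom(N,X)$ is nonzero because $X\subseteq e(M)$ forces $g$ to be detected on $e(M)$'' is a non sequitur: a nonzero $g\colon M\to X$ with $X\subseteq e(M)$ can perfectly well vanish on $e(M)$ and act only on $(1-e)(M)$. Moreover, no repair is possible, because retractability does not pass to summands even under the full hypotheses of the proposition. Take $R=\mathbb{Z}$, $M=\mathbb{Z}\oplus\mathbb{Q}$, and let $e\in S$ be the projection onto $\mathbb{Q}$. Then $M$ is nonsingular and retractable (every nonzero $X\hookrightarrow M$ admits a nonzero map from the summand $\mathbb{Z}$), and $M$ satisfies (II): since $\Hom_{\mathbb{Z}}(\mathbb{Q},\mathbb{Z})=0$, every right ideal $K$ of $S$ either annihilates the summand $\mathbb{Q}$ --- in which case $f\mapsto f((1,0))$ identifies $K$ with a subgroup of $M$, $KM$ equals that subgroup, and $K\stackrel{*}{\hookrightarrow}I_{KM}$ is readily checked --- or $K$ contains $eS$, in which case $K=I_{KM}$; by Lemma \ref{lm3.9}(2) this yields (II). Yet $e(M)=\mathbb{Q}$ is \emph{not} retractable, precisely because $\Hom_{\mathbb{Z}}(\mathbb{Q},\mathbb{Z})=0$. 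So Theorem \ref{th3.10} cannot be invoked for $N$: the hypothesis you dismiss as routine is the one that fails, while (II), which you flag as the main obstacle, is not the only problem. Your closing bridge breaks for the same reason: $\Hom_S(eS,X)\cong Xe$, and in the example above the nonzero submodule $X=\{f\in S\mid f(M)\subseteq\mathbb{Q},\ f(\mathbb{Q})=0\}$ of $eS$ satisfies $Xe=0$, so $eS_S$ is not retractable over $S$ and the lattice correspondence you posit between right ideals of $eSe$ and closed submodules of $eS_S$ is not available in general. Staying inside $M$ and $S$, as the paper does, is not a stylistic choice; it is what makes the argument valid.
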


\begin{proof}
We argue similarly to the proof of Theorem $\ref{th3.10}$. Note that if $K$ is a closed right ideal of $S$ contained in $eS$, then $KM$ is contained in $e(M).$ Conversely, if $Y$ is a closed submodule of $M$ contained in $e(M),$ then $I_Y$ is a right ideal of $S$ contained in $I_{e(M)}=eS.$

\end{proof}

By Theorem $\ref{th3.12}$ and Proposition $\ref{p2}$, we have.

\begin{corollary} \label{c4}
Let $M$ be a  retractable $R-$module which possesses (II). If $M$ is nonsingular and co-nonsingular, Utumi and co-Utumi, then the following conditions are equivalent for every $e=e^2\in S.$

  (1) $eM$ is CS with finite uniform dimension;

  (2) $eS_S$ is CS with finite uniform dimension;

  (3) $_SSe$ is CS with finite uniform dimension;

  (4) $eSe$ is right  CS with finite right uniform dimension;

  (5) $eSe$ is left CS with finite left uniform dimension.

\end{corollary}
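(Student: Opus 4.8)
The plan is to observe that the six hypotheses on $M$ force its endomorphism ring $S$ to be a nonsingular Utumi ring, and then to read off the equivalences $(2)\Leftrightarrow(3)\Leftrightarrow(4)\Leftrightarrow(5)$ directly from Theorem \ref{th3.12} applied to $S$, while $(1)\Leftrightarrow(2)$ comes from Proposition \ref{p2} together with a uniform-dimension count.

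First I would check that $S$ is a nonsingular Utumi ring. Since $M$ is nonsingular and retractable, $S$ is right nonsingular by \cite[Theorem 3.1]{khuri}, and since $M$ is co-nonsingular, $S$ is left nonsingular by \cite[Proposition 1]{khuri3}; hence $S$ is nonsingular. Because $M$ is moreover retractable, nonsingular, co-nonsingular, Utumi and co-Utumi, Lemma \ref{lm3.2} guarantees that $T=\End_R(E(M))$ is simultaneously the MRQR and the MLQR of $S$, so these two maximal quotient rings of $S$ coincide. Feeding this back into the ``in particular'' clause of Lemma \ref{lm3.2} for the nonsingular ring $S$ then yields that $S$ is right and left Utumi, i.e. a nonsingular Utumi ring.

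Next I would apply Theorem \ref{th3.12}, with the ring $R$ there replaced by $S$ and for the given $e=e^2\in S$. Its four equivalent conditions are exactly that $eS_S$ is CS, $_SSe$ is CS, $eSe$ is right CS and $eSe$ is left CS, each with the matching finiteness of uniform dimension; these are conditions $(2),(3),(4),(5)$ of the corollary, so $(2)\Leftrightarrow(3)\Leftrightarrow(4)\Leftrightarrow(5)$ is immediate. It remains to tie $(1)$ to $(2)$. By Proposition \ref{p2}(1), $e(M)$ is CS if and only if $eS_S$ is CS, which settles the extending half for free. For the dimensions, I would note that $M$ possesses (I) by Lemma \ref{lm3.9}(1) (it is nonsingular and retractable) and possesses (II) by hypothesis, so the assignments $X\mapsto I_X$ and $K\mapsto KM$ from the proof of Lemma \ref{lm3.13} apply; restricting them to closed submodules $X$ of $e(M)$ and to closed right ideals $K$ of $S$ contained in $I_{e(M)}=eS$ gives a bijection that preserves both independent (direct) sums and essentiality. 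Counting independent summands through it yields $\udim(e(M)_R)=\udim(eS_S)$, so one side has finite uniform dimension exactly when the other does. Combining this with Proposition \ref{p2}(1) gives $(1)\Leftrightarrow(2)$ and closes the cycle.

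The step I expect to be the main obstacle is this last uniform-dimension transfer between $e(M)$ and $eS_S$: Lemma \ref{lm3.13} is phrased for the full pair $(M,S)$, and I must verify that its direct-sum bijection descends to the corner cut out by $e$ --- concretely, that $I_X\subseteq eS$ whenever $X\subseteq e(M)$ and that $KM\subseteq e(M)$ whenever $K\subseteq eS$, and that essential extensions are respected inside these two sublattices. Once the nonsingular Utumi status of $S$ is secured, everything else is a direct invocation of Theorem \ref{th3.12} and Proposition \ref{p2}.
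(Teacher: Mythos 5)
Your proposal is correct and takes essentially the same route as the paper: the paper's entire proof of this corollary is the one-line invocation of Theorem \ref{th3.12} (applied to the nonsingular Utumi ring $S$) and Proposition \ref{p2}, which is exactly your skeleton. Your additional work — verifying via Khuri's results and Lemma \ref{lm3.2} that $S$ is indeed nonsingular Utumi, and transferring uniform dimension between $e(M)$ and $eS_S$ by restricting the correspondence of Lemma \ref{lm3.13} to the corner (using that $I_X\subseteq eS$ when $X\subseteq e(M)$ and $KM\subseteq e(M)$ when $K\subseteq eS$, as noted in the proof of Proposition \ref{p2}) — simply fills in details the paper leaves implicit.
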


As we mentioned in the introduction, this paper mainly consider rings without primeness. However, the following theorem give us an additional symmetry of the extending properties and finiteness of uniform dimension on nonsingular semiprime rings. This is not investigated in \cite{GCS, CS1, m-mCS}.

\begin{theorem} \label{thpr}
Let $R$ be a semiprime ring.

(1) $R$ is right CS, right nonsingular with finite right uniform dimension if and only if $R$ is left CS, left nonsingular with finite left uniform dimension.

(2) If $R$ is nonsingular, then the following statements hold true.
\begin{itemize}
  \item (2.1) $R_R$ is max CS with finite uniform dimension if and only if $_RR$ is min CS with finite uniform dimension.
  \item (2.2) $R_R$ is min CS with finite uniform dimension if and only if $_RR$ is max CS with finite uniform dimension.
  \item (2.3) $R_R$ is max-min CS with finite uniform dimension if and only if $_RR$ is max-min CS with finite uniform dimension.
\end{itemize}

In all the cases above, $R$ is right and left Goldie with $\udim(R_R)=\udim(_RR)=n$ for some integer $n\geq 0.$
\end{theorem}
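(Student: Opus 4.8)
The plan is to reduce the semiprime case to the prime‐ring symmetry results quoted in the introduction and to the theorems already proved, the bridge being the maximal quotient ring together with the central idempotents that semiprimeness supplies. I begin with part~(1), say the direction assuming $R$ is right $CS$, right nonsingular with $\udim(R_R)=n<\infty$. Right nonsingularity together with finiteness of the right uniform dimension forces $Q:=Q^r_{\max}(R)$ to be von Neumann regular, right self-injective of finite right uniform dimension, hence semisimple artinian; by Goldie's theorem for semiprime rings this $Q$ is simultaneously the classical right quotient ring of $R$. By Lemma~\ref{lm3.3} the ring $R$ is right Utumi, and by Lemma~\ref{lm3.4} it is left nonsingular, so $R$ is nonsingular.

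The decisive use of semiprimeness is to pass to prime factors. For each central block idempotent $z_t$ of the semisimple ring $Q=\prod_t z_tQ$, the intersection $z_tQ\cap R$ is a closed two-sided ideal of $R$; since $R$ is right $CS$ it is a direct summand of $R_R$, say $z_tQ\cap R=gR$ with $g=g^2\in R$. Because $gR$ is a two-sided ideal that is a direct summand of a \emph{semiprime} ring, $g$ must be central (one checks that $(1-g)Rg=0$ and that $gR(1-g)$ is a square-zero one-sided ideal, hence zero), and comparing closures in $Q$ gives $gQ=z_tQ$, so $g=z_t$. Thus every $z_t$ lies in $R$ and $R=\prod_t R_t$ is a finite direct product of rings $R_t=z_tR$, each an order in the simple artinian ring $z_tQ$ and therefore \emph{prime} right Goldie, inheriting right nonsingularity and the right $CS$ property. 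Now I would invoke \cite{GCS} on the factors of right uniform dimension at least two, dispose of the one-dimensional factors separately, conclude that each $R_t$, and hence $R$, is left nonsingular, left $CS$ with finite left uniform dimension, and recombine. Once $R$ is seen to be nonsingular and Utumi on both sides, Theorem~\ref{th3.5} reproves the left $CS$ property intrinsically and Proposition~\ref{p1} yields $\udim(R_R)=\udim(_RR)=n$ together with the two-sided Goldie conclusion. The converse is symmetric.

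For part~(2) the ring is already assumed nonsingular, and finiteness of the right uniform dimension collapses min $CS$ to $CS$; hence the min $CS$ and the max-min $CS$ hypotheses (the latter containing min $CS$) immediately put us in the situation of part~(1), giving the prime decomposition and, through Theorem~\ref{th3.7} and Proposition~\ref{p1}, the asserted equivalences and the equality $\udim(R_R)=\udim(_RR)=n$. The only genuinely one-directional statements left are the implications starting from \emph{max} $CS$ in (2.1) and (2.2). For these I would either reduce to the prime case and quote the max-min $CS$ symmetry of \cite{m-mCS}, or argue directly inside $Q$ that the maximal closed right ideals at issue have nonzero left annihilator, using the annihilator duality $r_R(X)=eQ\cap R$ and $l_R(X)=Qe\cap R$ for the relevant primitive idempotent $e\in Q$, so that the max $CS$ hypothesis genuinely applies and supplies the missing direct summands; the complementary directions come for free from part~(1) since $CS$ implies both max $CS$ and min $CS$.

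\textbf{Main obstacle.} The crux is the middle step: extracting the central block idempotents of $Q$ into $R$, equivalently establishing the two-sided Utumi / left-order property, from the extending hypothesis. This is exactly where semiprimeness is indispensable, since a semiprime right Goldie ring need not be left Goldie, and it is the $CS$-type assumption that makes the block ideals direct summands and hence central. I expect the most delicate points to be the prime factors of small uniform dimension (orders in division rings), where the reduction to \cite{GCS} is unavailable, and the pure max-$CS$ directions, where one cannot simply upgrade to $CS$ and must instead verify the left-annihilator conditions by hand in the semisimple ring $Q$.
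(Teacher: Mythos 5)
Your reduction to prime blocks is sound as far as it goes: under the right CS hypothesis the ideals $z_tQ\cap R$ are closed, hence direct summands, the square-zero argument in a semiprime ring does make their idempotent generators central, and one really does get $R=\prod_t R_t$ with each $R_t$ a prime, right nonsingular, right Goldie, right CS ring whose maximal right quotient ring is the simple artinian block $z_tQ$. But the point you flag as merely ``delicate'' --- the factors of right uniform dimension one --- is not a delicate point; it is an unfillable hole, because the statement is false there. A prime, right nonsingular, right Goldie, right CS ring with $\udim(R_R)=1$ is precisely a right Ore domain, and a right Ore domain need not be left Ore: the paper's own closing example, taken from \cite[Exercise 1, page 101]{Go}, is such a ring. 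That ring is right uniform, hence right CS and right max-min CS, prime and right nonsingular with $\udim(R_R)=1$; yet a domain with finite left uniform dimension is left Ore, so $\udim({}_RR)$ is infinite and the ring is not left CS, nor left min CS, nor left max CS, with finite left uniform dimension. So the one-dimensional factors cannot be ``disposed of separately,'' and no argument can dispose of them: this single example contradicts part (1), each of (2.1)--(2.3), and the final Goldie/uniform-dimension clause of Theorem \ref{thpr}. Your proposal therefore cannot be completed --- not because your strategy is wrong, but because it honestly exposes exactly where the theorem itself fails.

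For comparison, the paper's proof takes a different route and founders at the corresponding spot. It never decomposes $R$; instead it invokes Utumi's maximal two-sided quotient ring $Q$ of the (two-sided) nonsingular ring $R$, asserts that finite right uniform dimension forces this $Q$ to be semisimple, deduces from that the finiteness of $\udim({}_RR)$, and then reruns the lattice-isomorphism arguments of Theorems \ref{th3.5} and \ref{th3.7} over $Q$. The semisimplicity claim is unjustified and is false for a right-Ore-not-left-Ore domain: a semisimple two-sided quotient ring of a domain would be a division ring in which $R$ is left dense, forcing $R$ to be left Ore. Two further remarks on your write-up. First, in the pure max CS directions of (2.1)--(2.2) your decomposition is not even available, since max CS only makes \emph{maximal} closed right ideals with nonzero left annihilator into summands, and the block ideals $z_tQ\cap R$ are in general neither; your first proposed fix (``reduce to the prime case'') is therefore circular, and the second is left unexecuted. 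Second, on the positive side, in the CS case your block argument combined with \cite{GCS} is rigorous whenever every block has uniform dimension at least two, so it does yield a correct theorem under that added hypothesis --- which is more than the paper's own proof achieves.
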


\begin{proof} For the case of \emph{(1)}, Lemma $\ref{lm3.4}$ implies that a right CS right nonsingular ring is left nonsingular, and a left CS left nonsingular ring is right nonsingular. Thus, $R$ is right and left nonsingular for both cases \emph{(1)} and \emph{(2)}.

  Since $R$ is nonsingular, $R$ has a maximal two-sided quotient ring $Q$ by \cite[Lemma 1.4]{utumi}. Since $R_R$ has finite uniform dimension, $Q$ is semisimple.
  Therefore, $\udim(_RQ)$ is finite so is $\udim(_RR)$. Since $R$ is nonsingular with finite right and left uniform dimension, $R$ is a right and left Goldie ring by
  \cite[Corollary 3.32]{Go}. Therefore, $Q$ is a classical right and left quotient ring of $R$ as well as a maximal right and left quotient ring of $R$ by \cite[Theorem 3.37]{Go}. We argue similarly when $_RR$ has finite uniform dimension.

Now, the proof about equivalence of the extending conditions on the right and left sides of $R$ is similar to Theorem $\ref{th3.5}$ and Theorem $\ref{th3.7}$.
\end{proof}

{\bf Examples.}
It is easy to find examples of right and left max-min CS rings. In particular, one of such a ring is $R=\left(
                                       \begin{array}{cc}
                                         F & F \\
                                         0 & F \\
                                       \end{array}
                                     \right),$
                                     where $F$ is a field.

There is a module which is neither max CS nor min CS. Let  $Z$ be the set of all integers. Consider $Z-$module $M=(Z/Z2)\oplus (Z/Z8).$ We observe that $A=((1+Z2)\oplus (2+Z8))Z$ is a minimal closed submodule of $M$ but not a direct summand. It is easy to verify that $A$ is also a maximal closed submodule with non-zero left annihilator in the endomorphism ring of $M$. Thus, $M$ is neither max CS nor min CS, although $M$ has finite uniform dimension.

There exists a ring which is a right Ore domain but not a left Ore domain. Such a ring is mentioned (namely $R$) in  \cite[Exercise 1, page 101]{Go}. It is not difficult to see  that $R$ is right max-min CS but not left min CS. If $R$ is min CS, then $R$ must be left uniform, so is left Ore, a contradiction.

\end{document}